\newlength\longarc
\newcommand{\R}{{\if mm {\rm I}\mkern -3mu{\rm R}\else \leavevmode
		\hbox{I}\kern -.17em\hbox{R} \fi}}
\newcommand{\blambda}{\boldsymbol\lambda}
\newcommand{\bB}{\mbox{\boldmath{$B$}}}
\newcommand{\bdelta}{\mbox{\boldmath{$\delta$}}}
\newcommand{\bPi}{\mbox{\boldmath{$\Pi$}}}
\newcommand{\bC}{\mbox{\boldmath{$C$}}}
\newcommand{\bu}{\mbox{\boldmath{$u$}}}
\newcommand{\bv}{\mbox{\boldmath{$v$}}}
\newcommand{\bx}{\mbox{\boldmath{$x$}}}
\newcommand{\by}{\mbox{\boldmath{$y$}}}
\newcommand{\bF}{\mbox{\boldmath{$F$}}}
\newcommand{\bI}{\mbox{\boldmath{$I$}}}
\newcommand{\bg}{\mbox{\boldmath{$g$}}}
\newcommand{\fb}{\mbox{\boldmath{$f$}}}
\newcommand{\bn}{\mbox{\boldmath{$n$}}}
\newcommand{\bsigma}{\mbox{\boldmath{$\sigma$}}}
\newcommand{\bSigma}{\mbox{\boldmath{$\Sigma$}}}
\newcommand{\btau}{\mbox{\boldmath{$\tau$}}}
\newcommand{\bvarphi}{\mbox{\boldmath{$\varphi$}}}
\newcommand{\bvarepsilon}{\mbox{\boldmath{$\varepsilon$}}}
\newcommand{\bnu}{\mbox{\boldmath{$\nu$}}}
\newcommand{\betau}{\mbox{\boldmath{$\tau$}}}
\newcommand{\bzero}{\mbox{\boldmath{$0$}}}
\newcommand{\bgamma}{\mbox{\boldmath{$\gamma$}}}
\newtheorem{theorem}{Theorem}[section]
\newtheorem{definition}[theorem]{Definition}
\newtheorem{proposition}[theorem]{Proposition}
\numberwithin{equation}{section}
\begin{document}
\vskip 5mm
\author{\it  Mika\"el Barboteu$^*$ 
	, Francesco Bonaldi$^*$,
 David Danan$^\#$ and Soad El-Hadri$^\&$\\[5mm]
 {\small barboteu@univ-perp.fr},\ {\small francesco.bonaldi@univ-perp.fr},\
  \small{david.danan@irt-systemx.fr}, \	\small{elhadrisoadd@gmail.com},\\[0.3cm]
	$^*$\small{Laboratoire de Mod\'elisation Pluridisciplinaire et Simulations}\\[-0mm]{\small Universit\'e de
		Perpignan Via Domitia}\\[-0mm] \small{52 Avenue Paul Alduy, 66860 Perpignan,
		France}\\[0.3cm]
	 $^\#$\small{Institut de Recherche Technologique SystemX}\\[-0mm]{\small Centre d'int\'egration Nano-INNOV}\\[-0mm] \small{8, Avenue de la Vauve, 91127 Palaiseau CEDEX
		France}\\[0.3cm]
	 $^\&$\small{École Nationale de l'Aviation Civile}\\[-0mm] \small{7 Avenue Edouard Belin, 31400 Toulouse}\\[0.3cm]		
}

\title{An Improved Normal Compliance Method for Dynamic Hyperelastic Problems with Energy Conservation Property}

\date{}

\begin{abstract}
	 The purpose of this work is to present an improved energy conservation method for hyperelastodynamic contact problems based on specific normal compliance conditions. In order to determine this Improved Normal Compliance (INC) law, we use a Moreau--Yosida $\alpha$-regularization to approximate the unilateral contact law. Then, based on the work of Hauret--LeTallec \cite{hauret2006energy}, we propose in the discrete framework a specific approach allowing to respect the energy conservation of the system in adequacy with the continuous case. This strategy (INC) is characterized by a conserving
behavior for frictionless impacts and admissible dissipation for
friction phenomena while limiting penetration. Then, we detail the numerical treatment within the framework of the semi-smooth Newton method and primal-dual active set strategy for the normal compliance conditions with friction. We finally provide some numerical experiments to bring into light the energy conservation and the efficiency of the INC method by comparing with different classical methods from the literature throught representative contact problems.
	 
\end{abstract}

\maketitle

\bigskip \noindent {\bf AMS Subject Classification\,:} 74M15, 74M20, 74M10, 74B20, 74H15, 74S30, 49M15, 90C53, 70F40, 70-08, 70E55, 35Q70

\bigskip \noindent {\bf Keywords:} Hyperelasticity, Unilateral Contact, Normal Compliance, Moreau--Yosida Regularization, Coulomb Friction, Dynamics, Semi-Smooth Newton method, Primal-Dual Active Set, Numerical simulations.



\section{Introduction} \label{int}
Dynamic problems involving contact  between deformable bodies or between a body and a foundation still remain today an important subject of study for mathematical and numerical analysis. In the literature, many references offer different approaches for the usual contact conditions with friction \cite{Ac,CDR98,CFHMPR,kpr,lebon2003,pietrzak1999large,laur2002,RCL88,SM,wr}. As a matter of fact, the equations resulting from frictional contact problems are difficult to solve both mathematically and numerically due to their inherent non-linearity and the non-smoothness. In order to handle these issues, several methods have already been successfully tested. They rely on handling the original conditions by normal compliance methods \cite{KO,OK,MartOden} or by other relevant methods such as the quasi-augmented lagrangian method \cite{Ac}, the bi-potential method \cite{dSF91, dumont13}, the conjugate gradient method \cite{laur2002,wr}, Uzawa method \cite{RCL88,uzawa} and Nitsche finite element method  \cite{Chouly14,Chouly,CFHMPR} and references therein. Moreover, Newton's semi-smooth method with Primal-Dual Active Set (PDAS) procedure appears as one of the most relevant methods to solve contact problems with friction because of their efficiency and their simplicity of implementation \cite{hint2,wohlcoulomb,wohl}. Some works have been dedicated to studying the efficiency of PDAS methods, as well as to solve linear elasticity problems with unilateral boundary conditions \cite{hint2,hint,ABD1}, or even to solve  non-linear multi-body contact problems in elastodynamics \cite{wohlcoulomb,wohl,ABCD2}.
 
However, when considering impact problems whether in small or large deformations, even the standard implicit schemes ($\theta$-method, Newmark schemes, midpoints or Hilber-Hughes-Taylor type methods \cite{hht}) lose their unconditional stability, which leads either to an energy explosion or to a numerical dissipation which is  neither realistic nor acceptable mechanically, as explained in \cite{hht,simo,gonz,arro}. Therefore, it is necessary to use appropriate implicit schemes providing energy conservation type properties.
To this purpose, many references \cite{hauret2006energy,laur2002,simo,gonz,arro, Lach,AP,LL,ayyad2009formulation,Acary13,Acary15,barboteu2015hyperelastic} propose relevant approaches to solve these impact problems with balance energy properties.

This work proposes two traits of novelty. The first one arises from the improved frictional contact model we consider, which provides intrinsic energy-controlling properties. Contact is modeled with a general Moreau--Yosida  regularization \cite{conj,MoYosi,regular} of the unilateral condition. The Moreau--Yosida regularization  seems to be an appropriate tool to find a regular contact model (Normal Compliance) which respects the kinetic energy of the system while preserving the non-interpenetration of contact.  The discrete Improved Normal Compliance (INC) will be well suited to respect energy conservation in adequation with the continuous framework.  The second trait of novelty consists in the analysis and the implementation of an energy-controlling method, based on a semi-smooth Newton method combined with an active set method via the complementarity functions for normal compliance with friction models. In summary, the major novelty comes from the combination of the Active Set approach with the Improved Normal conformance method and the fact that such a combination is also useful and relevant for the conservation of the energy in hyper-elastodynamics. Based on representative examples from the literature \cite{laur2002,ayyad2009formulation,khenous2008mass}, we study and analyze numerically this Improved Normal Compliance method for dynamic hyperelastic problems with the main objective of respecting the conservation of energy and the non-interpenetration condition during impacts.

The remainder of the article is organized as follows. In Section \ref{s2}, we present and explain the physical framework and the mathematical model studied, then we recall the formulation of hyperelastic problems with frictional contact. We present in detail various contact models as unilateral contact, persistent contact, and normal compliance conditions. After formulating the strong and variational problems, we detail the energy conservation and dissipation properties in the continuous case using the specific properties of the normal compliance conditions. Section \ref{s3} is devoted to the discretization of the hyperelastodynamic problem with contact and the approach (INC) adapted to respect the energy conservation in adequation to the continuous framework. In Sections \ref{s4} and \ref{s5}, we propose an innovative, fast and efficient Primal Dual Active Set (PDAS) method to solve a hyperelastodynamic problem with Improved Normal Compliance and Coulomb friction. Contact and friction conditions are realized by applying an Active Set strategy via a non-linear complementarity function based on the semi-smooth Newton iterative scheme.
In the last section, we provide the numerical experiments of the hyperelastodynamic problems in contact with and without friction carried out with a dynamic elastic ball, then a hyperelastic ring both launched in the direction of a rigid foundation. We present  comparative studies between different numerical energy conservation methods during the impacts of the systems [1, 2, 31, 32].
%
%
\section{Hyperlastic problems for low velocity impact with friction}\label{s2}
\subsection{Hyperlastic framework}
A hyperelastic body occupies a bounded domain $\Omega\subset \mathbb{R}^d$ $(d=1,2,3)$ with a continuous Lipschitz boundary $\Gamma$, divided into three disjoint measurable parts $\Gamma_1 $, $\Gamma_2$, and $\Gamma_3$. We denote by $\bx=(x_i)$ the point in $\Omega\cup\Gamma$ used and we designate by ${\bn}=(n_i)$ the unit outward normal over $\Gamma$. The indices $i$, $j$, $k$, $l$ vary between $1$ and $d$ ($d$ is the space dimension), and unless otherwise specified, the summation convention on repeated indices is employed. We denote by $\mathbb{M}^d$ the space of second-order tensors on $\mathbb{R}^d$ or, equivalently, the space of square matrices of order $d$. The scalar product and the norm on $\mathbb{R}^d$ and $\mathbb{M}^d$ are defined by
\begin{eqnarray*}
	&\bu\cdot \bv=u_i v_i\ ,\qquad
	\displaystyle{\|\bu\|=(\bu\cdot \bu)^{\frac{1}{2}}}\qquad
	\forall \,\bu, \bv\in \mathbb{R}^d, \\
	&\bsigma:\bgamma=\sigma_{ij}\gamma_{ij}\ ,\qquad
	\|\bsigma\|=(\bsigma:\bsigma)^{\frac{1}{2}} \qquad\,\forall\,
	\bsigma,\bgamma\in\mathbb{M}^d.
\end{eqnarray*}
\noindent Let $\bu$ and $\boldsymbol{\Pi}$ denote the displacement field and the first {Piola--Kirchhoff} stress tensor, respectively, 
and the normal and tangential components of $\bu$ on $\Gamma$, which are given by $u_{n} = \bu\cdot{\bn}$, $\bu_{\tau}=\bu - u_{n}\bn$, where $\bn$ is the unit normal outside $\Gamma$. We consider that an index following a comma represents the partial derivative with respect to the corresponding spatial variable of $\bx$,\ $\displaystyle u_{i,j}=\frac{\partial u_i}{ \partial x_j}$. Dots above a function represent partial derivatives with respect to time, i.e.~$\dot\bu=\displaystyle\frac{\partial\bu}{\partial t}$ and
$\ddot\bu=\displaystyle\frac{\partial^2\bu}{\partial t^2}$.
Moreover, we recall that the divergence operator acting on a tensor field $\boldsymbol\tau$ is ${\rm Div}\,\boldsymbol{\tau}=(\tau_{ij,j})$.\\
In the problems studied later, the behavior of the material is described by a hyperelastic constitutive law. We recall that hyperelastic constitutive laws are characterized by the first Piola--Kirchhoff tensor $\boldsymbol{\Pi}$, which derives from a deformation energy density $W: \Omega\times\mathbb{ M}^d_+\to\mathbb{R}$, $\boldsymbol{\Pi}(\bx,\mathbf F)=
\frac{\partial}{\partial {\bf F}} W(\bx,{\bf F})=\partial_{\bf F} W(x,{\bf F})$, for all $\bx \in\Omega$ and ${\bf F}\in\mathbb M_+^d$, where $\mathbb M_+^d = \{{\bf F}\in \mathbb M^d: \det {\bf F} > 0\}$. Here ${\bf F}$ is the deformation gradient defined by ${\bf F} = {\bf I} + \nabla {\bu}$
and $\partial_{\bf F}$ represents the differential with respect to the variable ${\bf F}$, for more details on hyperelasticity see \cite{ciarlet1988mathematical,le1994numerical,laur2002}. In what follows, we consider a dynamic problem with contact and friction in which the hyperelastic body comes into contact with a perfectly rigid obstacle $\Omega^{obs}\subset \mathbb{R}^d$ $(d=1,2,3)$ which is a rigid bounded domain  with a continuous Lipschitz boundary $\Gamma^{obs}$, the so-called foundation (see Figure \ref{body}).
\begin{figure}[h!]
	\centering
	\subfloat[]{\includegraphics[scale=.425]{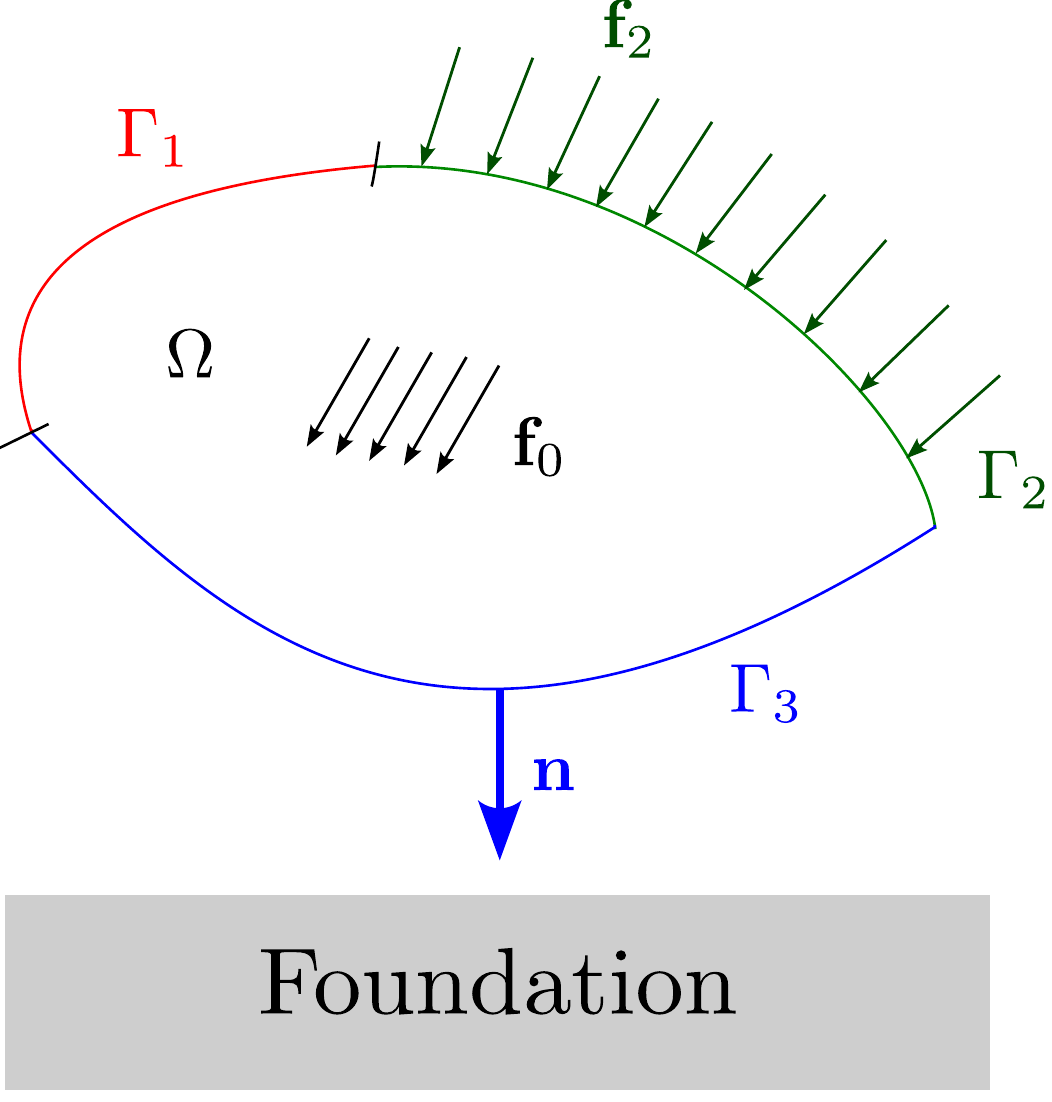}} \hspace{2cm}
	\subfloat[]{\raisebox{.05cm}{\includegraphics[scale=.425]{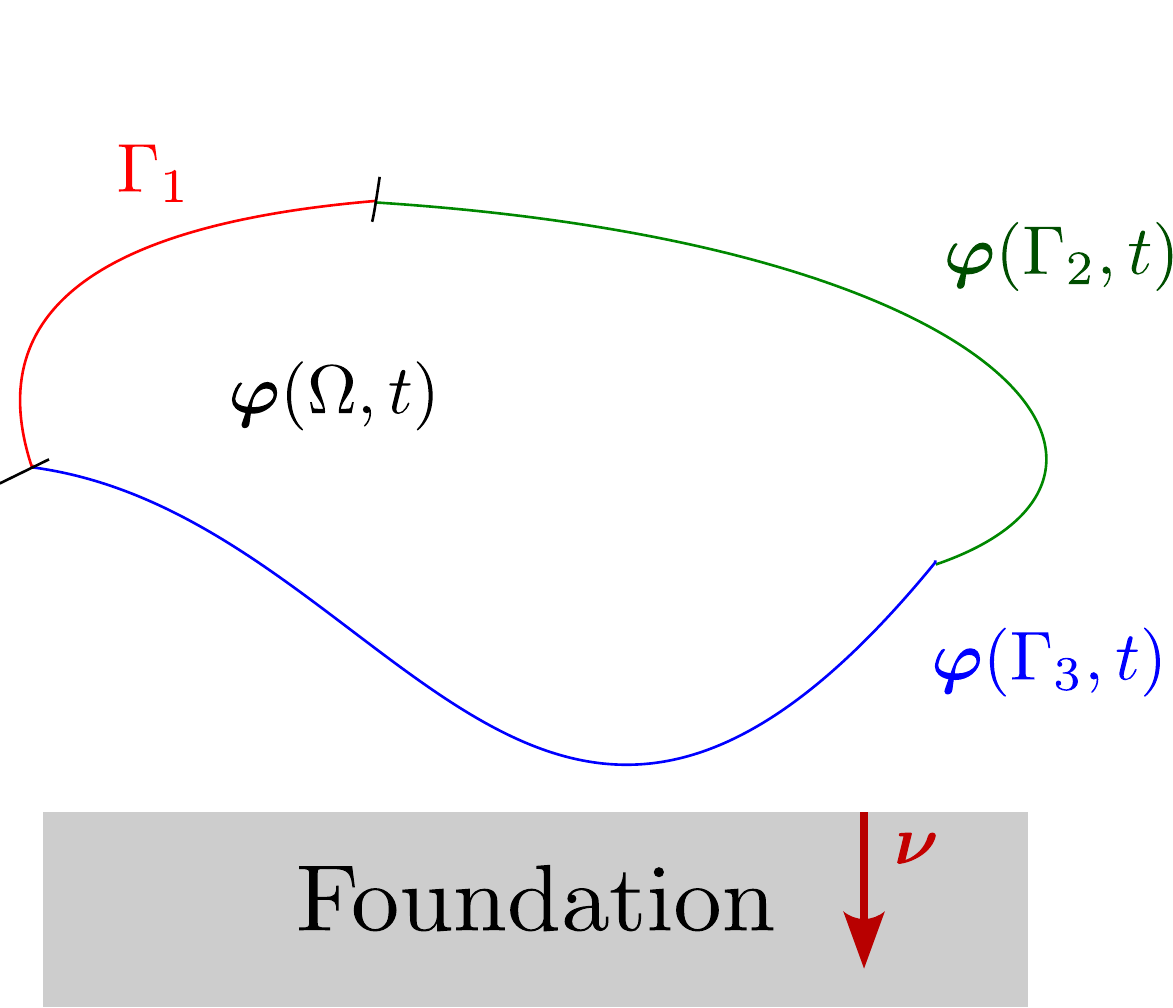}}}
	\caption{Reference (a) and deformed (b) configurations of a body.}
	\label{body}
\end{figure}
The hyperelastic body is subjected to the action of volumetric forces with density $\fb_0$ and surface tractions with density $\fb_2$
which act on $\Gamma_2$. In the rest of the paper, we consider the time interval of interest $[0,T]$ with $T>0$. We denote by $t\in [0,T]$ the time variable.

\subsection{Frictional contact conditions}\label{reg_cd}

Suppose the body is fixed on $\Gamma_1$ and can come into contact at $\Gamma_3$ with the foundation. In the following, the frictional contact conditions are based on the combination of normal compliance conditions with a Coulomb law of dry friction on $\Gamma_3$. We denote by $\bvarphi:\overline{\Omega}\times[0,T]\to\mathbb{R}^d$ the deformation field, with $\bvarphi(\bx,t)=\bx+\bu( \bx,t)$ the position at time $t\in[0,T]$ of point $\bx\in\overline{\Omega}$. For any point $\bx\in\Gamma_3$, we define the point $\overline{\by}(\bx,t)$ of the foundation closest to $\bx$:
\begin{eqnarray*}
	\overline{\by}(\bx,t)=\arg\min_{{\by \in \Gamma^{obs}\color{black}}}{\|\bvarphi (\bx,t)-\by \|_2}.
\end{eqnarray*}
In this way, one can define the minimal allowed contact distance (gap) between a point of $\Gamma_3$ and its orthogonal projection on the rigid foundation as follows:
\begin{eqnarray*}
	g_{\nu}=(\bvarphi(\bx,t)-\overline{\by}(\bx,t))\cdot\bnu, \quad \forall \bx \in \Gamma_3,
\end{eqnarray*}
\noindent where $\bnu$ is the inner unit normal vector to the rigid foundation. The normal force of contact $\Pi_\nu$, assumed to be negative, can be written in the direction $\bnu$:
\begin{eqnarray*}
	\Pi_\nu=\bnu \cdot \bPi \bn.
\end{eqnarray*}
In the same way, the tangential force of contact can also be expressed according to the first tensor of Piola--Kirchhoff:
\begin{eqnarray*}
	\bPi_{\tau}=\bPi\bn-\Pi_{\nu} \bnu,
\end{eqnarray*}
With these definitions in place, the tangential contact velocity $\dot{\bg}_\tau$ of a point $\bx\in \Gamma_3$, relative to the opposite surface of the foundation, is given by
\begin{eqnarray*}
	\dot{\bg}_\tau=[\bI_d-\bnu \otimes \bnu]\dot{\bu} (\bx,t),
\end{eqnarray*}
The conditions of contact and friction are posed on the boundary $\Gamma_3$ or $\Gamma_3\times[0,T]$ in a time-dependent problem and pertain to the normal and tangential components $\Pi_\nu$ and $\bPi_\tau $ of the surface contact force, respectively, and the displacements $u_\nu$, $\bu_\tau$. With these considerations, the unilateral conditions on the boundary of contact $\Gamma_3$ are given in the following section.
\subsubsection{Law of unilateral contact with friction}
The law of unilateral contact of a solid on a rigid obstacle was proposed in 1933 by Signorini \cite{Signorini} and is written in the form of three conditions: a condition of non-penetration, a condition of compression and a condition of complementarity.  Thereafter and for convenience, we will change notation for the contact variables:
$\delta_{\nu}=g_{\nu}$ and $\lambda_{\nu}=-\Pi_{\nu} $. Therefore, the unilateral contact conditions read
\begin{equation}\label{cont_uni}
\begin{aligned}
		&\textrm{Non-penetration condition:}\quad&\delta_{\nu}\leq 0,  \\
		&\textrm{Compression condition:} \quad&\lambda_\nu\geq0, \\
		& \textrm{Complementarity condition:} \quad&\lambda_\nu\delta_\nu=0. \\
\end{aligned}
\end{equation}
From a mechanical viewpoint, this amounts to considering a perfectly rigid foundation; no matter how much compressive force is applied, no penetration occurs.\\
Coulomb's law of friction involves the tangential friction stress $\bPi_\tau $, the normal contact pressure $\Pi_\nu$, and the tangential contact velocity $ \dot \bu_{\tau}$ that we now denote $\dot\bdelta_{\tau}=\dot\bu_{\tau}$, as well as the tangential stress $\blambda_{\tau}=-\bPi_{\tau}$, as follows:
\begin{equation}
	\begin{aligned}
		&\|\blambda_\tau\|\le\mu\,|\lambda_\nu|&\quad \rm stick\ status, &\\
		&\displaystyle\|\blambda_\tau\|=\mu\,|\lambda_\nu|\,\frac{{\dot\bdelta}_\tau}{\|{\dot\bdelta}_\tau\| } \ \ {\rm if}\ \ \dot{\bdelta}_\tau\ne 0
		&\quad \rm slip\ status.
	\end{aligned}
\end{equation}
\noindent where $\mu \ge 0$ is the coefficient of friction.\\
If the norm of the tangential stress $\blambda_\tau$ is less than the friction threshold $\mu\,|\lambda_\nu|$, then there is sticking between the body and the foundation. If, on the other hand, this threshold is reached, then the body slides on the foundation while the tangential stress is constant and depends on the unit tangent $\btau={\dot\bdelta}_{\tau}/\|{\dot\bdelta}_{\tau}\|$.

\noindent Note that it is possible to write conditions (\ref{cont_uni}) as the following subdifferential inclusion:
\begin{equation}
	\lambda_{\nu} \in \partial \Psi_{\mathbb{R}^{-}}(\delta_{\nu})\quad {\rm on}\quad \Gamma_3\times(0,T) ,
\end{equation}
where $\partial$ represents the sub-differential operator in the sense of convex analysis and $\Psi_A$ denotes the indicator function of the set $A \subset \mathbb{R}$.
A similar consideration for frictional stress leads to
\begin{equation}
	\blambda_{\tau}\in \mu\lambda_{\nu}\partial\|{\dot\bdelta}_{\tau}\|
	\quad{\rm on}\quad\Gamma_3\times(0,T).
\end{equation}

\subsubsection{Law of persistent contact with friction}
Such a law is slightly on the margins of the previous one insofar as its main interest resides in its natural properties of energy conservation. The persistency condition is expressed as a complementarity condition between the normal stress $\lambda_\nu$ and the tangential velocity $\dot\delta_\nu$, namely:
\begin{eqnarray}
	\lambda_\nu \dot{\delta}_\nu={0}. \label{pers}
\end{eqnarray}
This condition alone is sufficient to guarantee that the work of the contact normal force $\displaystyle{\int_{\Gamma_3}\lambda_\nu \dot{\delta}_\nu}=0$ vanishes. By combining with the unilateral contact law, it takes the following form:
\begin{equation}
	\begin{cases}
		\rm{if} \quad \delta_\nu<0,\quad\lambda_\nu=0,&\\
		\rm{if} \quad \delta_\nu=0, \quad\lambda_\nu\in-\partial\Psi_\mathbb{R^+}(\dot{\delta}_\nu).
	\end{cases}
\end{equation}
We refer the reader to~\cite{laur2002,hauret2006energy,ayyad2009formulation} for more details.
\subsubsection{Normal compliance law via a $\alpha$-Moreau--Yosida regularization}
We briefly present the $\alpha$-Moreau--Yosida regularization \cite{conj, MoYosi} of the unilateral condition of Signorini and begin with a reminder of the concepts of variational analysis. Let $S$ be a subset of a Hilbert space $\mathbf{H}$ endowed with the norm $\|{\cdot}\|$.

\begin{definition}
	Let $f$ be a lower bounded semicontinuous function defined by 
	$f: \mathbf{H}  \to  \mathbb{R}\cup {\{+\infty\}}$.
	For all $r>0$, the $\alpha$-Moreau--Yosida envelope \cite{conj, MoYosi, reguEmi, regular} of $f$, with $\alpha\geq 2$, is defined by:
	\begin{equation}
		f_r^{\alpha}(z)=\inf_{y\in \mathbf{H}}\displaystyle{\Big(f(y)+\frac{1}{r}\|y-z\|^\alpha\Big),\quad\forall z\in\mathbf H. }
	\end{equation}
\end{definition}
\begin{theorem}
	Let $f: \mathbf{H} \to  \mathbb{R}\cup {\{-\infty\}}$, the regularization of $\partial f$ for all $r>0$ is the gradient $\nabla{f_r^{\alpha}}$ associated with the envelope $f_r^{\alpha}$.
\end{theorem}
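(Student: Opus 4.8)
The plan is to treat $f_r^\alpha$ as the infimal convolution of $f$ with the smooth, strictly convex kernel $g_r(w)=\frac{1}{r}\|w\|^\alpha$, and to transfer the regularity of $g_r$ onto the envelope. Throughout I assume, as is standard for Moreau--Yosida theory, that $f$ is proper, convex and lower semicontinuous (the symbol $\mathbb{R}\cup\{-\infty\}$ in the statement should be read as $\mathbb{R}\cup\{+\infty\}$, in line with the lower-boundedness hypothesis of the preceding definition), so that $\partial f$ is a maximal monotone operator.

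First I would prove existence and uniqueness of the minimizer defining $f_r^\alpha(z)$. For fixed $z\in\mathbf{H}$, the functional $y\mapsto f(y)+\frac{1}{r}\|y-z\|^\alpha$ is proper, convex and lower semicontinuous, hence weakly lower semicontinuous; it is coercive because $f$ is bounded below while $\|y-z\|^\alpha\to\infty$ with $\|y\|$ for $\alpha\ge2$; and it is strictly convex, since $w\mapsto\|w\|^\alpha$ is strictly convex in a Hilbert space for $\alpha\ge2$. The direct method then yields a unique minimizer, which I denote $y^\ast=y_r^\alpha(z)$.

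Next comes the first-order characterization. The kernel $g_r$ is Fr\'echet differentiable for $\alpha\ge2$, with
\[
\nabla g_r(w)=\frac{\alpha}{r}\|w\|^{\alpha-2}\,w,
\]
so the optimality condition at $y^\ast$ reads
\[
0\in\partial f(y^\ast)+\nabla g_r(y^\ast-z),\qquad\text{equivalently}\qquad \frac{\alpha}{r}\|z-y^\ast\|^{\alpha-2}(z-y^\ast)\in\partial f(y^\ast).
\]
This isolates a distinguished, single-valued selection of $\partial f$ evaluated at the proximal point $y^\ast(z)$. The heart of the argument is to show that this selection is exactly $\nabla f_r^\alpha(z)$: at the optimal splitting $z=y^\ast+(z-y^\ast)$ the subdifferential calculus for infimal convolutions gives $\partial f_r^\alpha(z)=\partial f(y^\ast)\cap\{\nabla g_r(z-y^\ast)\}$, and since $g_r$ is differentiable this set reduces to the singleton $\{\nabla g_r(z-y^\ast)\}$. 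A convex function whose subdifferential is everywhere single-valued is G\^ateaux (indeed, once the gradient map is shown continuous, Fr\'echet) differentiable, whence
\[
\nabla f_r^\alpha(z)=\nabla g_r(z-y^\ast)=\frac{\alpha}{r}\|z-y^\ast\|^{\alpha-2}(z-y^\ast)\in\partial f(y^\ast),
\]
which is precisely the regularization of $\partial f$ claimed in the statement (for $\alpha=2$ it is the classical Yosida approximation built from the resolvent of $\partial f$).

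The step I expect to be the main obstacle is the differentiability claim: rigorously upgrading single-valuedness of the convex subdifferential to genuine Fr\'echet differentiability with the stated gradient, and controlling the degeneracy of $\nabla g_r$ at $y^\ast=z$ when $\alpha>2$, where the factor $\|z-y^\ast\|^{\alpha-2}$ vanishes but the whole expression stays continuous and well defined. Should the infimal-convolution calculus prove delicate in the infinite-dimensional setting, I would fall back on a direct two-sided estimate: writing the optimality inequalities at $z$ and at a perturbed point $z+h$ and invoking convexity of $f$, one brackets the increment $f_r^\alpha(z+h)-f_r^\alpha(z)$ between $\langle\nabla g_r(z-y^\ast),h\rangle+o(\|h\|)$ from above and below, which directly delivers both Fr\'echet differentiability and the gradient formula.
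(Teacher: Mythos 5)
The paper does not actually prove this theorem: it states it and immediately defers to the cited references on Moreau--Yosida regularization, so there is no in-paper argument to compare against. Your reconstruction is the standard one and is essentially correct: existence and uniqueness of the proximal point by the direct method (strict convexity of $w\mapsto\|w\|^{\alpha}$ for $\alpha\ge 2$ in a Hilbert space is right), the first-order condition $0\in\partial f(y^{\ast})+\nabla g_r(y^{\ast}-z)$ via the sum rule (valid since $g_r$ is everywhere continuous), the exact infimal-convolution identity $\partial(f\,\square\,g_r)(z)=\partial f(y^{\ast})\cap\{\nabla g_r(z-y^{\ast})\}$, and the upgrade from a singleton subdifferential of the finite, hence continuous, convex function $f_r^{\alpha}$ to (Gâteaux, then Fréchet) differentiability. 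The one substantive point you should make explicit rather than bury in a parenthesis is that you have \emph{added} convexity of $f$ as a hypothesis: the theorem as printed assumes only that $f$ is lower bounded and lower semicontinuous (with a sign typo on $\pm\infty$), and in that generality the conclusion is false --- the Moreau envelope of a nonconvex function need not be differentiable and $\partial f$ need not be monotone, so there is no Yosida approximation to speak of. Your restriction is the correct reading, and it is harmless for the paper, whose only use of the theorem is $f=\Psi_{\mathbb{R}^{-}}$, which is proper, convex and lsc; your identification of $\nabla f_r^{\alpha}(z)=\frac{\alpha}{r}\|z-y^{\ast}\|^{\alpha-2}(z-y^{\ast})\in\partial f(y^{\ast})$ then specializes exactly to the paper's subsequent Proposition (up to the multiplicative constant, which the paper itself handles inconsistently). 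Your fallback via two-sided optimality estimates is also a legitimate and fully self-contained way to get Fréchet differentiability if you prefer to avoid the infimal-convolution calculus in infinite dimensions.
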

\noindent  For the details of proof, see  \cite{conj, MoYosi,reguEmi,regular}\color{black}.\\
If we take $f=\Psi_{\mathbb{R}^-}$, for $r>0$ and $\alpha\geq 2$, we have
\begin{equation*}
	(\Psi_{\mathbb{R}^-})_r^{\alpha}(z)=\inf_{y\in \bf H}\displaystyle{\Big(\Psi_{\mathbb{R}^-}(y )+\frac{1}{r}\|y-z\|^\alpha\Big)}
	=\inf_{y\in \mathbb{R}^-}\Big(\displaystyle{\frac{1}{r}\|y-z\|^\alpha\Big)} \\
	\eqqcolon\displaystyle{\frac{1}{r} {\rm {\rm dist}}_{\mathbb{R}^-}^\alpha}(z).
\end{equation*}
\begin{proposition}
	Let $\alpha\geq2$ on;
	\begin{equation}
		\nabla(\Psi_{\mathbb{R}^-})_r^{\alpha}(z)=\nabla \displaystyle{\Big(\frac{1}{r} {\rm dist}_{\mathbb{R} ^-}^\alpha}(z)\Big) \\
		=\frac{1}{r}{\rm proj}_{\mathbb{R}^+}(z)\|{\rm proj}_{\mathbb{R}^+}(z)\|^{\alpha-2}.
	\end{equation}
\end{proposition}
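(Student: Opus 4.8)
The plan is to reduce everything to a distance function and then differentiate, exploiting the special structure of the mutually polar cones $\mathbb{R}^-$ and $\mathbb{R}^+$. The first equality in the statement is just the identification already displayed immediately before the proposition: since $t\mapsto t^\alpha$ is increasing on $[0,\infty)$, minimizing $\frac1r\|y-z\|^\alpha$ over the closed convex set $\mathbb{R}^-$ is equivalent to minimizing the distance $\|y-z\|$, whose unique minimizer is the metric projection $\bar y=\mathrm{proj}_{\mathbb{R}^-}(z)$. Hence $(\Psi_{\mathbb{R}^-})_r^\alpha(z)=\frac1r\|z-\mathrm{proj}_{\mathbb{R}^-}(z)\|^\alpha=\frac1r\,\mathrm{dist}_{\mathbb{R}^-}^\alpha(z)$, and it remains to compute $\nabla\big(\frac1r\,\mathrm{dist}_{\mathbb{R}^-}^\alpha\big)$.

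Next I would rewrite the distance in terms of the positive-part projection. By Moreau's decomposition applied to the pair of mutually polar cones $\mathbb{R}^-$ and $\mathbb{R}^+$, every $z$ splits orthogonally as $z=\mathrm{proj}_{\mathbb{R}^-}(z)+\mathrm{proj}_{\mathbb{R}^+}(z)$; therefore $z-\mathrm{proj}_{\mathbb{R}^-}(z)=\mathrm{proj}_{\mathbb{R}^+}(z)$ and $\mathrm{dist}_{\mathbb{R}^-}(z)=\|\mathrm{proj}_{\mathbb{R}^+}(z)\|$. The envelope then reads $\frac1r\|\mathrm{proj}_{\mathbb{R}^+}(z)\|^\alpha$, which is the quantity to differentiate.

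The differentiation splits into two regimes. On the interior $\{z:\mathrm{proj}_{\mathbb{R}^+}(z)=0\}$ (i.e. $z\in\mathbb{R}^-$, $z\neq0$) the envelope vanishes identically, so its gradient is zero, which agrees with the claimed formula since $\mathrm{proj}_{\mathbb{R}^+}(z)=0$ there. Off $\mathbb{R}^-$ I would invoke the classical fact that the distance to a closed convex set is differentiable outside the set with $\nabla\,\mathrm{dist}_{\mathbb{R}^-}(z)=(z-\mathrm{proj}_{\mathbb{R}^-}(z))/\mathrm{dist}_{\mathbb{R}^-}(z)=\mathrm{proj}_{\mathbb{R}^+}(z)/\|\mathrm{proj}_{\mathbb{R}^+}(z)\|$, and then apply the chain rule to $t\mapsto\frac1r t^\alpha$:
\[
\nabla\Big(\tfrac1r\,\mathrm{dist}_{\mathbb{R}^-}^\alpha(z)\Big)=\frac{\alpha}{r}\,\mathrm{dist}_{\mathbb{R}^-}^{\alpha-1}(z)\,\nabla\,\mathrm{dist}_{\mathbb{R}^-}(z)=\frac{\alpha}{r}\,\mathrm{proj}_{\mathbb{R}^+}(z)\,\|\mathrm{proj}_{\mathbb{R}^+}(z)\|^{\alpha-2},
\]
which is the asserted expression; I would flag that a strict application of the chain rule carries a factor $\alpha$, to be absorbed into the normalizing constant if the envelope is taken with weight $1/(\alpha r)$ rather than $1/r$.

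Finally I would check that the two branches glue into a genuinely $C^1$ function, and this is where I expect the only real difficulty — and where the hypothesis $\alpha\geq2$ is used. As $z\to\partial\mathbb{R}^-=\{0\}$ one has $\mathrm{proj}_{\mathbb{R}^+}(z)\to0$, and the product $\mathrm{proj}_{\mathbb{R}^+}(z)\|\mathrm{proj}_{\mathbb{R}^+}(z)\|^{\alpha-2}$ has norm $\|\mathrm{proj}_{\mathbb{R}^+}(z)\|^{\alpha-1}\to0$ because $\alpha-1\geq1$; thus the gradient extends continuously by zero across the boundary, matching the interior branch and confirming that $(\Psi_{\mathbb{R}^-})_r^\alpha$ is continuously differentiable with the stated gradient. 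The condition $\alpha\geq2$ is precisely what keeps the exponent $\alpha-2\geq0$, so that the closed-form expression is well defined with no singularity and the gluing is smooth; the delicate part is making this boundary argument rigorous, e.g. by estimating the difference quotients of $\|\mathrm{proj}_{\mathbb{R}^+}(\cdot)\|^\alpha$ near $z=0$ directly rather than relying on the off-set gradient formula alone.
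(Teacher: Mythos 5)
The paper states this proposition without proof --- the only pointer given is the citation list attached to the preceding theorem --- so there is no in-paper argument to compare against; your proposal has to stand on its own, and it does. The reduction of the envelope to $\frac{1}{r}\,\mathrm{dist}_{\mathbb{R}^-}^{\alpha}(z)$ (already displayed in the paper), the Moreau decomposition $z=\mathrm{proj}_{\mathbb{R}^-}(z)+\mathrm{proj}_{\mathbb{R}^+}(z)$ for the mutually polar cones, the chain rule applied off $\mathbb{R}^-$ using $\nabla\,\mathrm{dist}_{\mathbb{R}^-}(z)=\mathrm{proj}_{\mathbb{R}^+}(z)/\|\mathrm{proj}_{\mathbb{R}^+}(z)\|$, and the $C^1$ gluing at the boundary (which is exactly where $\alpha\geq 2$ is used) are all sound; your remark that one should verify differentiability at $0$ via difference quotients rather than only by continuity of the off-set gradient is the right level of care. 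The most useful outcome of your argument is the factor you flag: the gradient of $\frac{1}{r}([z]_+)^{\alpha}$ is $\frac{\alpha}{r}([z]_+)^{\alpha-1}$, so the formula displayed in the proposition is missing a factor $\alpha$. Note moreover that the normal compliance law the paper then derives, $\lambda_\nu=c_\nu\frac{\alpha}{2}([\delta_\nu]_+)^{\alpha-1}$ with $c_\nu=\frac{1}{r}$, corresponds to the gradient of the envelope taken with weight $\frac{1}{2r}$ rather than $\frac{1}{r}$; so the normalization is internally inconsistent between the proposition and its application, and your computation pinpoints exactly where the constant should be fixed.
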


The contact with a deformable foundation is modeled by the normal compliance condition. It attributes a reactive normal pressure depending on the interpenetration of the foundation; this means that the normal stress $\lambda_\nu$ is a function of the normal displacement $\delta_\nu$. A general expression for the normal compliance condition is then given by
\begin{equation}
	\lambda_\nu=p(\delta_\nu) \quad\mbox{on} \quad \Gamma_3\times(0,T)
\end{equation}
where $p(\cdot)$ vanishes for negative arguments.
The normal compliance condition was first introduced in \cite{MartOden,OM}. This standard condition can be considered as a Moreau--Yosida regularization of the unilateral contact condition of Signorini with $\alpha = 2$ and $z=\delta_\nu$: \\
{\it  Standard Normal Compliance (SNC) with friction}
\begin{equation}\label{compl}
	\lambda_\nu=c_\nu\frac{\alpha}{2}([\delta_\nu]_+)^{\alpha-1}\quad \mbox{on} \quad \Gamma_3\times(0, T),
\end{equation}
where $[x]_+$ is the positive part of $x\in \mathbb R$, and $c_\nu = \frac{1}{r} $ can be assimilated to the stiffness coefficient of the foundation. This law of normal compliance (\ref{compl}) has two particularities: first, it allows to reduce the interpenetration while the second aspect lies in its natural quasi-energy-conservation properties (see energy balance in section \ref{prop_ener}), which is critical from a physical point of view. In adequacy with this regularization process, it is assumed that the law of friction can expressed as
\begin{eqnarray}\label{compl1}
	\left\{\begin{array}{ll}
		\textrm{if}\quad ||\blambda_\tau||<\mu |\lambda_\nu|\quad \blambda_\tau=c_\tau\dot\bdelta_\tau, \ \rm stick &\\
		\textrm{if}\quad ||\blambda_\tau||=\mu |\lambda_\nu|\quad \blambda_\tau=\mu \lambda_\nu\displaystyle{\frac{\dot\bdelta_\tau} {\|\dot\bdelta_\tau\|}}, \ \rm slip
	\end{array}\right.
	\mbox{on}&\ \Gamma_3\times(0,T),
\end{eqnarray}
where $c_\tau > 0$ is the tangential compliance parameter.

\subsection{Strong formulation of the mechanical problem}\label{strong}
With the preceding notation, the strong formulation of the problem  is the following one.\\
\medskip\noindent
{\bf Problem} ${\cal P}$. {\it Find displacement field
	$\bu:\Omega\times [0,T]\to\mathbb{R}^d$ and the stress field
	$\boldsymbol{\Pi}:\Omega\times [0,T]\to\mathbb{M}^d$ such that}
\begin{eqnarray}
	\label{1} \boldsymbol{\Pi}=\partial_{\bf F} W({\bf F})\quad&{\rm in}\
	&\Omega\times(0,T),\\[3mm]
	\label{2} {\rm Div}\,\boldsymbol{\Pi}+\fb_0= \rho\ddot{\bu}\quad&{\rm
		in }\ &\Omega\times(0,T),\\[3mm]
	\label{3} \bu=\bzero\quad &{\rm on}\ &\Gamma_1\times(0,T),\\[3mm]
	\label{4} \boldsymbol{\Pi}\bnu=\fb_2\quad&{\rm on}\
	&\Gamma_2\times(0,T),\\[3mm]
	\label{5} \lambda_\nu=c_\nu\frac{\alpha}{2}[\delta_\nu]_+^{\alpha-1} \quad \quad&{\rm on}\
	&\Gamma_3\times(0,T),\\[3mm]\label{6} \left\{\begin{array}{ll} ||\blambda_\tau||<\mu|\lambda_\nu| \quad \blambda_\tau=c_\tau\dot\bdelta_\tau & \\
		||\blambda_\tau||=\mu|\lambda_\nu|\quad \blambda_\tau=\mu \lambda_\nu\displaystyle{\frac{\dot\bdelta_\tau}{\|\dot\bdelta_ \tau\|}}\end{array}\right. \quad
	&{\rm on}&\ \Gamma_3\times(0,T), \\[3mm]
	\label{7}\bu(0)=\bu_0,\ \dot{\bu}(0)=\bu_1 \quad &{\rm in}&\ \Omega.
\end{eqnarray}
with $\alpha\ge 2$, $\mu$ the coefficient of friction depending on the sliding rate  and  $c_\nu$, $c_\tau$ the compliance parameters.\\
Equation (\ref{1}) represents the hyperelastic constitutive law of the material, (\ref{2}) represents the equation of motion in which $\rho> 0 $ is the material density and is assumed constant, for simplicity. The conditions (\ref{3}), (\ref{4}) represent respectively the boundary conditions of displacement and traction. Conditions (\ref{5}) and (\ref{6}) respectively represent the conditions of contact with normal compliance and friction described in the preceding section. Finally, (\ref{7}) represents the initial conditions in which $\bu_0$ and $\bu_1$ are respectively the initial displacement and velocity.

\subsection{Variational formulation of the problem}\label{variat}

In order to derive the variational formulation of {\bf Problem} ${\cal P}$, additional notation and some preliminary elements are necessary. The classical notation for the Sobolev and Lebesgue spaces associated with $\Omega$ and $\Gamma$ is used.
We consider a closed subspace of $H^{1}$ as follows:
Following the approach of Duvaut and Lions \cite{duvaut_lions}, we introduce the following Hilbert spaces:
$$ V=\{\bv\in H^{1}(\Omega;\mathbb{R}^d): \bv=\mathbf 0\,\,\,\text{on} \,\,\,\Gamma_1\} \quad  \text{and} \quad H=L^{2}(\Omega;\mathbb{R}^d)$$
they are Hilbert spaces endowed with the scalar products $(\bu,\bv)_V$ and $(\boldsymbol{\Pi},\betau)_H$ and their associated norms $\|{\cdot}\|_ {V}$ and $\|{\cdot}\|_{H}$ respectively.\\
Note that the Lagrange multipliers $\lambda_{\nu}$ and $\blambda_{\tau}$ are taken equal to $-\Pi_{\nu}$ and $-\bPi_{\tau}$, respectively.\ \
In order to introduce a variational formulation of the mechanical problem, we consider the following spaces \cite{duvaut_lions,khenous2006hybrid}:
$$X = \left\{\,v|_{\Gamma_3}: \ \bv \in V\, \right\} \subset H^{1/2}(\Gamma_3;\mathbb{R}^d)$$
$$X_\nu=\left\{\,v_\nu|_{\Gamma_3}: \ \bv \in V\, \right\} ,  \ \ X_\tau=\left\{\,\bv_\tau|_{\Gamma_3}: \ \bv \in V\, \right\}$$
and their topological dual spaces  $X^{'}$, $X^{'}_\nu$ and $X'_\tau$. We assume that $\Gamma_3$ is sufficiently regular so that $X_\nu \subset H^{1/2}(\Gamma_3;\mathbb{R})$, $X_\tau \subset H^{1/2}(\Gamma_3;\mathbb{R}^{d-1})$, $X^{'}_\nu \subset H^{-1/2}(\Gamma_3;\mathbb{R})$ and $X^{'}_\tau \subset H^{-1/2}(\Gamma_3;\mathbb{R}^{d-1})$.
Moreover, we denote by
$\langle\cdot,\cdot\rangle_{X_{\nu}^{'},X_{\nu}}$ and $\langle\cdot,\cdot\rangle_{X_{\tau}^{'}, X_{\tau}}$ the corresponding duality products. For more details on the operators traces, we can refer to \cite{khenous2006discretization,khenous2006hybrid}.

We move on to the variational formulation (or weak formulation) for {\bf Problem} ${\cal P}$.
Multiplying $(\ref{2})$ by any virtual velocity $\bv$ and applying Green's formula, we get
\begin{equation}\label{G_step2}
	- \int_{\Omega} {\bf \Pi}(t):\nabla{\bv}\, dx + \int_{\partial \Omega} {\bf \Pi}(t){\bn}\cdot{\bv}\ da + \int_{\Omega} {\boldsymbol f}_0(t)\cdot{\boldsymbol v} \,dx = \int_{ \Omega} \rho \ddot{\boldsymbol{u}}(t)\cdot{\bv}\, dx
\qquad\forall\bv\in V. \color{black}\\
\end{equation}
To establish the variational formulation of {\bf Problem} ${\cal P}$ (\ref{1})--(\ref{7}), we need some additional notation.
Thus, we consider the function $\fb:(0,T)\rightarrow V^*$ where the exterior volume and surface force densities are assumed to be such that
\begin{eqnarray}
	\label{f}
	&& \fb_0 \in L^2(0,T;L^{2}(\Omega)), \qquad \fb_2\in L^2(0,T;L^{2}(\Gamma_2)),
\end{eqnarray}
so that
\begin{eqnarray}
	&&\label{ff}(\fb(t),\bv)_V=(\fb_0(t),\bv)_H+(\fb_2(t),\bv)_{L^2(\Gamma_2)},\qquad\forall\bv\in V .
\end{eqnarray}
Using the duality between $V^*$ and $V$, Green's formula and the contact conditions with friction on the boundary $\Gamma_3$, the equation (\ref{G_step2}) becomes:
\begin{eqnarray}
	&& \langle{\rho\ddot{\bu}(t),\bv}\rangle_{{V^*}\times{V}} + \label{x1}\langle{\boldsymbol{\Pi}(t),\nabla\bv}\rangle_{{V^*}\times{V}}=(\fb(t),\bv)_V
	+\int_{\Gamma_3}\,\Pi_\nu(t)
	v_\nu\,da+\int_{\Gamma_3}\,\boldsymbol{\Pi}_\tau(t)\cdot\bv_\tau\,da.\nonumber
\end{eqnarray}
Finally, we obtain the variational formulation of the contact problem with friction ${\cal P}$ using the Lagrange multiplier $\lambda_{\nu}$, related to the normal contact stress $\Pi_{\nu}$, and the Lagrange multiplier $\blambda_{\tau}$, related to the tangential contact stress $\bPi_{\tau}$, in terms of two unknown fields (here and in the following, we drop explicit mention of time dependence for ease of presentation).\\
{\textbf{Problem} ${\cal P}_V$}:
{\it Find the displacement field $\bu \in L^\infty(0,T;V)$, with $\dot\bu \in L^2(0,T;V)$ and \mbox{$\ddot \bu \in L^2(0,T;V^*)$}, the normal stress field $\lambda_{\nu}: (0,T)\rightarrow X_{\nu}^{'}$ and the tangential stress field $\blambda_{\tau}: (0,T)\rightarrow X_{\tau}^{'}$ such that, $\forall\,\bv\in V$},
\begin{eqnarray}
	\label{10}
	\langle\rho\ddot{\bu},\bv\rangle_{{V^*}\times{V}}+\langle{\boldsymbol{\Pi},\nabla\bv}\rangle_{{V^* }\times{V}} + \langle \lambda_{\nu},v_{\nu}\rangle_{X_{\nu}^{'},X_{\nu}}+\langle\blambda_{\tau}, \bv_{\tau}\rangle_{X_{\tau}^{'},X_{\tau}}=
	(\fb,\bv)_V, \quad \text{in }\ \ (0,T),\\[3mm]
	\label{11}\lambda_\nu=c_\nu\frac{\alpha}{2}[\delta_\nu]_+^{\alpha-1} \quad {\rm on}\quad\Gamma_3\times (0,T),\\[3mm]
	\label{12}
	\left\{\begin{array}{ll} ||\blambda_\tau||<\mu|\lambda_\nu|\quad \blambda_\tau=c_\tau\dot\bdelta_\tau &\\
		||\blambda_\tau||=\mu|\lambda_\nu|\quad \blambda_\tau=\mu \lambda_\nu\displaystyle{\frac{\dot\bdelta_\tau}{\|\dot\bdelta_ \tau\|}}\end{array}\right.\quad {\rm on}\quad\Gamma_3\times(0,T),\\[3mm]\nonumber
\end{eqnarray}
{\it and, moreover,}
\begin{equation}\label{BDSxx}
\bu(0)=\bu_0,\qquad\dot\bu(0)=\bu_1.
\end{equation}
According to Hadamard's definition of the meaning of well-posed problems, it is known that the variational problem ${\mathcal P}_V$ is ill-posed. Indeed, this problem is very complex due to the multiplicity of its solutions for several reasons: hyperelastic medium, dynamic process, and friction conditions.  For further details on the existence of solution in particular hyperelastic cases and with more regularity, one can refer to the following references: \cite{ciarlet1988mathematical,le1994numerical,barboteu2018analysis}.

\subsection{Energy conservation properties in the continuous case}\label{prop_ener}
From a physical point of view, the solution of a hyperelastic problem should satisfy some conservation properties like conservation of energy, conservation of kinematic momentum, and conservation of linear momentum.
We are now particularly interested in the energy conservation properties.
In the absence of contact and friction, the conservation of energy can be written as follows:
\begin{eqnarray}
	\int_{0}^{t} \dot{E}(s)ds = E(t)
	-E(0)
	= \int_{0}^{t}\int_{\Omega} {\bf f}_0\cdot \dot{\bu}\,\,dx \, ds \ + \ \int_{0}^{t}
	\int_{\Gamma_2} {\bf f}_2\cdot \dot{\bf u}\,\, da\, ds,
\end{eqnarray}
where $(s,t) \in (0,T)\times(0,T)$ and ${E}(t)$ denotes the internal energy of the system at time $t$, defined as
\begin{equation}
	{E}(t) = \frac{1}{2} \int_{\Omega} \rho |\dot{\bf u}|^{2} dx +
	\int_{\Omega} \widetilde{W}({\bf C})dx \label{ene_elas}
\end{equation}
where ${\bf C} = {\bf F}^T {\bf F}$ is the Cauchy-Green tensor and $\widetilde{W}({\bf C}) := W({\bf F})$ \color{black}. \\
Moreover, the conservation of energy for hyperelastic phenomena with frictional contact is written in the following way:
\begin{equation}
\begin{aligned}
	E(t)-E(0)
	= \int_{0}^{t}\int_{\Omega} {\bf f}_0\cdot {\dot\bu}\,\, dx  ds \ + \ \int_{0}^{t}
	\int_{\Gamma_2} {\bf f}_2\cdot {\dot\bu}\,\, da ds \ \\ -\int_{0}^{t}\int_{\Gamma_3}\,\lambda_\nu
	\dot\delta_\nu\,da ds-\int_{0}^{t}\int_{\Gamma_3}\,\blambda_\tau \cdot \dot\bdelta_\tau\,da ds 
\end{aligned}
\label{bil_ini_1}
\end{equation}
Using the variational formulation of the problem by taking ${\bv} = \dot{\bu}(t,{\bx})$, we obtain the frictional contact reaction work given by
\begin{equation}
	\mathcal{W}_{c+f} =
	\int_{\Gamma_3} ({\lambda_\nu}
	{\dot\delta_\nu}+{\blambda_\tau \cdot \dot\bdelta_\tau}
	)\ da.
\end{equation}
where $ \dot\delta_\nu$ and $\dot\bdelta_\tau$ represent the time derivatives of $\delta_\nu$ and ${\bdelta_\tau}$ respectively. \\
When the volume and surface forces are not taken into account, and with the presence of the normal contact compliance law without friction, the expressions of the energy allow us to obtain
\begin{equation}
	\displaystyle{E(t)-E(0)
		=-\int_{0}^{t}\int_{\Gamma_3} {\lambda_\nu}
		{\dot\delta_\nu}}\ da\ ds.
\end{equation}\label{cons0}
Using the general normal compliance condition (\ref{11}), we get:
\begin{equation}\label{cons}
	\displaystyle{E(t)-E(0)=-
		\int_{0}^{t}\int_{\Gamma_3}c_\nu\frac{\alpha}{2} ( \delta_\nu\big]_{+}^{\alpha-1}{\dot\delta_\nu} da \ ds}
	=-\frac{c_\nu}{2}\int_{\Gamma_3}\big(\big[\delta_\nu\big]_{+}^{\alpha}(t)-\big[\delta_\nu\big]_{+}^{\alpha}(0)\big)\, da
\end{equation}
The difference $\big[\delta_\nu\big]_{+}^{\alpha}(t)-\big[\delta_\nu\big]_{+}^{\alpha}(0)$ in (\ref{cons}) is very small since the penetrations $\big[\delta_\nu\big]_{+}(t)$ at any time $t$ are also small as long as the compliance parameter $c_\nu$ is sufficiently large. Therefore, the energy of the system is ``almost" conserved: $E(t)\approx E(0)$.\\
 Considering now the friction, we obtain
\begin{equation}\label{diss}
	{\blambda_\tau}\cdot{{\dot
			\bdelta_\tau}} \geq 0 \Rightarrow
	- \mathcal{W}_{c+f} \leq 0 \quad \Rightarrow \ E(0) \geq E(t)
\end{equation}
Concerning the expression (\ref{diss}), we note a dissipation of energy between the instants $0$ and $t$ because of the friction, which is physically acceptable due to the dissipative nature of this phenomenon.\\
When we take these energy expressions with the ``persistent'' conditions (\ref{pers}), we get the following results:
	\begin{eqnarray}\label{pe}
		&{\rm{Case\ without\ friction:}}\quad {\lambda_\nu}{{\dot
				\delta_\nu}}=0,\ \blambda_\tau\cdot\dot{\bdelta_\tau} = 0  \Rightarrow
		\mathcal{W}_{c+f} = 0 \Rightarrow \ E(0) = E(t), \\
			&	{\rm{Case\ with\ friction:}}\ \label{di} \ {\lambda_\nu}{{\dot
				\delta_\nu}}=0,\ {\blambda_\tau}\cdot{{\dot\bdelta_\tau}}\geq 0 \Rightarrow
		\mathcal{W}_{c+f}\geq 0   \Rightarrow \ E(0) \geq E(t).
	\end{eqnarray}
Here, (\ref{pe}) expresses the conservation of total energy when the persistence condition is applied, whereas by (\ref{di}), because of friction, we observe a dissipation of energy between the instants $0$ and~$t$.

\section{ Discrete formulation of the frictional contact problems}\label{s3}

\subsection{Variational approximation}\label{variat_approx} 
In this section, we introduce a discrete approximation in time and space of the problem ${{\mathcal P}}_V$, based on arguments similar to those used in \cite{ayyad2009formulation, ayyad2009frictionless, barboteu2015hyperelastic, barboteu2015dynamic, barboteu2018analysis, khenous2006discretization, kpr}. First of all, we recall some preliminary elements concerning the time discretization step.\\
Let $ N $ be an integer, and $\Delta t=\frac{T}{N}$ a time step. 
For a continuous function $f$ with respect to time, we will use the notation $f_j=f(t_j)$ for $0 \leq j \leq N$. In what follows, we consider a collection of discrete times $\{t_n\}_{n=0}^{N}$ which defines a uniform partition of the time interval $[0,T]= \bigcup_{ \scriptstyle n=1}^{N}[t_{n-1},t_{n}]$, with $t_0=0$, $t_{n}=t_{n-1} + \Delta t$, and $t_N=T$. Using truncated Taylor expansions, we find 
\begin{equation}
	 \bu_{n}=
	\bu_{n-1}+\frac{\Delta t}{2}(\dot\bu_{n}+\dot\bu_{n-1}) + \theta(\Delta t^2),
\end{equation}
Finally, for a finite sequence $\{u_n\}_{n=1}^N$, we denote the divided differences of the midpoints by:
\begin{equation}\label{midp}
	\dot \bu_{n-\frac{1}{2}}=\frac{\bu_{n}-\bu_{n-1}}{\Delta t} = \frac{1}{2}(\dot \bu_{n} + \dot \bu_{n-1}),
\end{equation}
 Then we use the notation $\Box_{n-\frac{1}{2}} = \frac{1}{2}(\Box_n +
\Box_{n-1})$, where $\Box_n$ represents the approximation of
$\Box(t_n)$. For regularity in time of the solution, see \cite{hht,laur2002,wr}.  \color{black} Let us note that the time integration scheme employed is based on an implicit scheme of order 2 which one finds in (\ref{midp}).\\
We now present some elements concerning the spatial discretization step. Let $\Omega$ be a polyhedral domain. Consider a regular partition $\{{\cal T}^h\}$ of triangular finite elements of $\overline {\Omega}$ which are compatible with the boundary decomposition
$\Gamma=\overline{\Gamma_1}\cup\overline{\Gamma_2}\cup\overline{\Gamma_3}$, i.e., if one side of an element $\mathrm T\in{\cal T}^h$ has more than one point on $ \Gamma $, then the side lies entirely on $\overline{\Gamma_1}$, $\overline{\Gamma_2}$ or
$\overline{\Gamma_3}$. The space $V$ is approximated by the finite dimensional space $V^h \subset V$ of continuous and piecewise affine functions, that is:
\begin{eqnarray}
	&& \hspace*{-0.7cm}V^h=\{\,\bv^h\in [C(\overline{\Omega})]^d \;:
	\; \bv^h|_{\mathrm T}\in [P_1(\mathrm T)]^d\,
	\,\,\, \forall\,\mathrm T\in {\cal T}^h, \nonumber\\
	&& \qquad \qquad \quad \bv^h=\bzero \,\,\, \hbox{at nodes on}\,\,\, \Gamma_1\},\nonumber
\end{eqnarray}
where $P_1(\mathrm T)$ represents the space of polynomials of degree less than or equal to 1 in $\mathrm T$ and $h>0$ is the spatial discretization parameter. Let us consider the following spaces \cite{khenous2006discretization,khenous2006hybrid}:
$$X^h_\nu=\left\{\,v^h_\nu|_{\Gamma_3}: \ \bv^h \in V^h\, \right\} ,  \ \ X^h_\tau=\left\{\,\bv^h_\tau|_{\Gamma_3}: \ \bv^h \in V^h\, \right\}$$
and their topological dual spaces   $X^{'h}_\nu$ and $X^{'h}_\tau$. We assume that $X^{'h}_\nu \subset X^{'}_\nu \cap L^2(\Gamma_3;\mathbb{R})$ and $X^{'h}_\tau \subset X^{'}_\nu \cap L^2(\Gamma_3;\mathbb{R}^{d-1})$. \color{black} For the discretization of the Lagrange multiplier spaces $X_{\nu}^{h}$ and $X_{\tau}^{h}$, we use piecewise constant functions as done in \cite{Ac,pietrzak1999large,khenous2006discretization,barboteu2015hyperelastic, BDMP:21, convergence-contact}.
The discrete Lagrange multiplier spaces, denoted by $X_{\nu}^{'h}$ and $X_{\tau}^{'h}$, are related to the discretization of the
normal stress $\lambda_\nu$ and the discretization of the friction stress $\blambda_\tau$, respectively.\\
With these previous notation and the midpoint scheme (\ref{midp}), we have the following discrete approximation for the problem ${{\mathcal P}}_V$ at time $t_{n-\frac {1}{2}}$: \\
\medskip \noindent{\bf Problem} ${\mathcal P}_V^{h}$. {\it
	Find a discrete displacement field
	$\bu^{h}=\{\bu_n^{h}\}_{n=0}^N\subset V^h$, a discrete normal stress field $\lambda_{\ nu}^{h} =\{{\lambda_{\nu}}_{n}^{h}\}_{n=0}^N
	\subset X_{\nu}^{' h}$, and a discrete tangential stress field
	$\blambda_{\tau}^{h} =\{{\blambda_{\tau}}_{n}^{h}\}_{n=0}^N \subset
	X_{\nu}^{' h}$ such that, for all $n=1,\ldots,N$,}
\begin{eqnarray}
	&& \rho \ddot{\bu}^{h}_{n-\frac{1}{2}} + \bB(\bu^{h}_{n-\frac{1}{ 2}}) + {\lambda_{\nu}}^{h}_{n-\frac{1}{2}}\bnu_{n-\frac{1}{2}}+ {\blambda_ {\tau}}^{h}_{n-\frac{1}{2}}- \fb = \bzero \\[2mm]
	&&\label{LMch} {\lambda_{\nu}}^{h}_{n-\frac{1}{2}}=
	c_\nu\frac{\alpha}{2}[\delta^{h}_{\nu_{n-\frac{1}{2}}}]_+^{\alpha-1},\\ [2mm]
	&&\label{LMfh}
	\left\{\begin{array}{ll}
		{\|\blambda_{\tau}}^{h}_{n-\frac{1}{2}}\|<\mu{|\lambda_{\nu}}^{h }_{n-\frac{1}{2}}|\quad {\blambda_{\tau}}^{h}_{n-\frac{1}{2}}=c_{\tau } \dot\bdelta_{{\tau}^{h}_{n-\frac{1}{2}}} &\\
		{\|\blambda_{\tau}}^{h}_{n-\frac{1}{2}}\|=\mu|{\lambda_{\nu}}^{h }_{n-\frac{1}{2}}|\quad {\blambda_{\tau}}^{h}_{n-\frac{1}{2}}=\mu {\lambda_{\nu}}^{h}_{n-\frac{1}{2}}\displaystyle{\frac{\dot{\bdelta_{\tau}}^{h}_ {n-\frac{1}{2}}}{\|\dot{\bdelta_{\tau}}^{h}_{n-\frac{1}{2}}\|}} \end{array}\right.\quad\\[2mm]
	&&\label{cih12} \bu^{h}_{0}=\bar\bu^h_{0}, \quad\quad
	\dot\bu^{h}_{0}=\bar\bu^h_{1}.\nonumber
\end{eqnarray}
where $\bB(\cdot)$ is the internal stress operator representing the first Piola--Kirchhoff tensor ${\bPi}$. The initial values $\bar\bu^h_0 \in V^h$ and $\bar\bu^h_1 \in V^h$ are discrete values resulting from the finite element approximation of $\bu_0$ and $\bu_1$, respectively.

\subsection{Usual discrete framework of energy conservation}\label{stan_cd_dis}

In the rest of the section, to simplify notation and readability, we do not indicate the dependence of the different variables on the discretization parameter $h$, i.e., we write $\bu$ instead of $\bu^{h}$. We begin by recalling some preliminaries regarding discrete energy conservation in the hyperelastodynamic contactless framework.
In order to solve a hyperelastic dynamic problem, we have to use adapted time integration schemes. When considering nonlinear dynamical problems, standard implicit schemes ($\theta$-method, Newmark schemes, midpoint methods or HHT methods) lose their unconditional stability, as explained in \cite{hht, gonz, laur2002}.
Therefore, it is necessary to use implicit energy-conserving schemes like those used in \cite{arro,gonz,hauret2006energy,laur2002,simo,ayyad2009formulation,barboteu2015hyperelastic,Acary15} which are appropriate due to their long-term time integration accuracy and stability. In all these methods, the corresponding discrete mechanical conservation properties are satisfied.
By taking into account the implicit second-order temporal integration scheme of midpoint (\ref{midp}), we obtain the weak form of a nonlinear hyperelastodynamic problem integrated between times ${t_{n-1 }}$ and $t_{n}$:
\begin{eqnarray}\label{f_dyna_int}
	\begin{cases}
		\mbox{Find} \ {\bu}_{n} \in U \quad \mbox{such that}\\
		\displaystyle{\frac{1}{\Delta t} \int_{\Omega} \rho( \dot{\bu}_{n}-\dot{\bu}_{n-1})\cdot{\bv }\ \rm{dx}
			+ \int_{\Omega} {\bf \Pi}^{\text{algo}}_{n-\frac{1}{2}}:\nabla{\bv}\ \rm{dx} -
			\int_{\Gamma_2} { {\mathbf f}_2}_{n-\frac{1}{2}}\cdot{\bv}\ \rm{dx} -
			\int_{\Omega}
			{{\mathbf f}_0}_{n-\frac{1}{2}}\cdot{\bv}\ \rm{da}=0.}
	\end{cases}
\end{eqnarray}
There, the discrete tensor ${\bf \Pi }^{\text{algo}}$ is introduced in order to satisfy the exact properties of the discrete energy. This tensor, defined by Gonzalez in \cite{gonz}, takes the form:
\begin{equation}\label{syst}
	\begin{cases}
		{\bf \Pi }^{\text{algo}}_{n-\frac{1}{2}}=\textbf{F}_{n-\frac{1}{2}}{\bSigma}^{\text{algo}} \\
		\displaystyle
		{\bSigma}^{\text{algo}}=2\frac{\partial \widetilde W}{\partial \bC}(\bC_{n-\frac{1}{2}}) + 2[\widetilde{W}( {\bC}_{n})- \widetilde{W}({\bC}_{n-1})-\frac{\partial \widetilde W }{\partial \bC}(\bC_{n- \frac{1}{2}}):\Delta {\bC}_{n-1}]\frac{\Delta {\bC}_{n-1}}{\Delta {\bC}_{n -1}:\Delta {\bC}_{n-1}}
	\end{cases}
\end{equation}
with $\Delta {\bC}_{n-1}={\bC}_{n}- {\bC}_{n-1}$ and ${\bC}_{n-1}={\bF}^T_{n-1} {\bF}_{n-1}$. Using the arguments of \cite{gonz} and as shown by the axiom of material indifference which implies that $\widetilde{W}(\bF)=\widetilde{W}(\bC)$, it follows that (\ref{syst}) verifies the exact conservation of energy characterized by the following condition:
\begin{equation}\label{algo}
	\displaystyle{\bPi}^{\text{algo}}_{n-\frac{1}{2}}:({\bF}_{n}- {\bF}_{n-1})={\bPi} ^{\text{algo}}_{n-\frac{1}{2}}:(\nabla{\bu}_{n}-\nabla{\bu}_{n-1} )=\widetilde{W}( {\bC}_{n})- \widetilde{W}({\bC}_{n-1}).
\end{equation}
For more details on the standard energy conservation framework, we refer the reader to \cite{arro,gonz,hauret2006energy,laur2002,simo}.
Much work has been devoted to extending the conservative properties of the frictionless contact; more precisely, Lauren and Chawla \cite{Lach} and Armero and Petocz \cite{AP} showed the advantage of the persistence condition to conserve energy in the discrete setting. Nevertheless, in all these works, the numerical method shows that the interpenetration only disappears when the time step tends towards zero. In order to overcome this
drawback, Laursen and Love \cite{LL} have developed an efficient
method, by introducing a discrete jump in velocity; however, this
method requires the solution of an auxiliary system in order to
compute the velocity update results. Furthermore, Hauret and Le
Tallec \cite{hauret2006energy} have considered a specific penalized
enforcement of the contact conditions which allows to provide
energy conservation properties. Then, Khenous, Laborde and Renard
\cite{khenous2008mass} have introduced the Equivalent Mass Matrix method
(EMM), based on a procedure of redistribution of the mass matrix.
Interpretations and extensions of this method can be found in
\cite{H}. The resulting problem exhibits Lipschitz regularity in
time and achieves good energy evolution properties, due to the
fact that the persistency condition is automatically satisfied.
The EMM approach was studied and used in many
works; for instance, theoretical and computational aspects related
to this model can be found in \cite{HSWW,khenous2008mass}.

\subsection{Improved approach to ``almost'' conserve the energy}\label{imp_cd_dis}

In what follows, based on the papers \cite{hauret2006energy,barboteu2015hyperelastic}, we present a improved energy conservation method for hyperelastodynamic contact problems with its extension to dissipation phenomena with friction. This method allows to enforce the general normal compliance law during each time step with ``minimal'' contact penetrations and with conservation properties which respect ``almost'' the energy.\\
In order to take into account  contact at time $t_{n -\frac{1}{2}}$, we choose to implicitly approach the term of contact with friction; thus the weak formulation integrated between the times $t_{n-1}$ and $t_{n}$ is
\begin{eqnarray}\label{f_dyna_int_cont}
	\begin{cases}
		\mbox{Find} \ {\bu}_{n} \in V \ \mbox{such that}\\
		\displaystyle{\frac{1}{\Delta t} \int_{\Omega} \rho( \dot{\bu}_{n}-\dot{\bu}_{n-1})\cdot{\bv }\ dx
			+ \int_{\Omega} {\bf \Pi}^{\text{algo}}_{n-\frac{1}{2}}:\nabla{\bv}\ dx -
			\int_{\Gamma_2} {\mathbf f_2}_{n-\frac{1}{2}}\cdot{\bv}\ dx -
			\int_{\Omega}
			{\mathbf f_0}_{n-\frac{1}{2}}\cdot{\bv}\ da}\\
		\quad\quad\quad\quad\quad\quad\quad\quad\quad\quad +\displaystyle{\int_{\Gamma_3} [ \lambda_{\nu_{n-\frac{1}{2}}}
			\dot{\delta}_{\nu_{n-\frac{1}{2}}}+\blambda_{\tau_{n-\frac{1}{2}}}
			\cdot\dot{\bdelta}_{\tau_{n-\frac{1}{2}}}]\ da=0}
		\end{cases}
	\end{eqnarray}
	 In order to obtain energy conservation properties, we propose to change the normal constraint $\lambda_\nu{_{n-\frac{1}{2}}}$ by an improved discrete value (\ref{improvement}) which will respect the energy balance of the continuous case (\ref{cons}).\\
	 \textbf{Discrete form of Improved Normal Compliance (INC) conditions}\\
	In order to  ``almost'' conserve the energy and to respect the energy balance of the continuous case, we replace the normal contact distance $(\delta_{\nu}^n)^{\alpha-1}$ by 
	\begin{eqnarray}\label{xx}
		\widetilde{\delta}_{\nu}^n\coloneqq\displaystyle{\frac{\big[\big(\delta_{\nu}^n\big)_{+}\big]^{\alpha}- \big[\big(\delta_{\nu}^{n-1})_{+}\big]^{\alpha}}{\alpha\big(\delta_{\nu}^n-\delta_{ \nu}^{n-1}\big)}}
		\end{eqnarray}
		We then obtain the normal stress value $\lambda_{\nu_{n-\frac{1}{2}}}$ of the improved normal compliance condition in the discrete case:
		\begin{equation}\label{improvement}
			\lambda_{\nu_{n-\frac{1}{2}}}=c_{\nu}\frac{\alpha}{2}\widetilde{\delta}_{\nu}^n.
		\end{equation}
		\textbf{Discrete Energy Evolution Analysis}\\
		This part is devoted to establishing the energy conservation properties induced by the improved normal compliance described in the previous paragraph.
		We use above the notation $E_n$ and $E_{n-1}$ for the energy $E$ of the hyperelastic system of contact with friction evaluated at times $t_n$ and $t_{n-1}$ respectively.
		For example, the discrete energy at time $t_n$ can be written as follows:
		\begin{equation}\label{energy_dis}
			E_{n} = \frac{1}{2} \int_{\Omega} \rho |\dot{\bf u}^{2}_{n}|\ dx + \int_{\Omega} \widetilde{W} ({\bC_{n}})\ dx.
		\end{equation}
		The general evaluation of the discrete energy of the contact problem with friction between times $t_n$ and $t_{n-1}$ is based on the following proposition.
		\begin{proposition}
			\rm{The following discrete energy balance holds between times $t_n$ and $t_{n-1}$}:
			\begin{eqnarray}
				\displaystyle{E_n-E_{n-1}=\Delta t\langle \fb_{n-\frac{1}{2}},u_{n-\frac{1}{2}}\rangle_{V^* \times V} -\Delta t\int_{\Gamma_3} \big[ \lambda_{ \nu_{n-\frac{1}{2}}}\dot{\delta}_{\nu_{n- \frac{1}{2}}}+
					\blambda_{ \tau_{n-\frac{1}{2}}}\cdot
					\dot{\bdelta}_{ \tau_{n-\frac{1}{2}}}\big]}\ da
			\end{eqnarray}
		\end{proposition}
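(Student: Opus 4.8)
The plan is to reproduce at the discrete level the continuous computation of Section~\ref{prop_ener}: there one chose $\bv=\dot\bu$ to turn the weak equation into an energy identity, and the natural discrete counterpart is to test the weak formulation (\ref{f_dyna_int_cont}) with the midpoint velocity $\bv=\dot\bu_{n-\frac{1}{2}}=\tfrac{1}{\Delta t}(\bu_n-\bu_{n-1})$, as given by (\ref{midp}). I would then show that each of the four groups of terms telescopes, and conclude by multiplying through by $\Delta t$ and rearranging.

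First I would treat the inertial term. Inserting $\bv=\dot\bu_{n-\frac{1}{2}}$ and writing it in the averaged form $\dot\bu_{n-\frac{1}{2}}=\tfrac{1}{2}(\dot\bu_n+\dot\bu_{n-1})$ from (\ref{midp}), the integrand becomes $\rho(\dot\bu_n-\dot\bu_{n-1})\cdot\tfrac{1}{2}(\dot\bu_n+\dot\bu_{n-1})=\tfrac{\rho}{2}(|\dot\bu_n|^2-|\dot\bu_{n-1}|^2)$ by the elementary identity $(a-b)\cdot(a+b)=|a|^2-|b|^2$. Hence this term equals $\tfrac{1}{\Delta t}(K_n-K_{n-1})$, where $K_j=\tfrac{1}{2}\int_\Omega\rho|\dot\bu_j|^2\,dx$ is the discrete kinetic energy.

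The internal-stress term is where the algorithmic tensor is essential. With the same test function it reads $\int_\Omega {\bPi}^{\text{algo}}_{n-\frac{1}{2}}:\nabla\dot\bu_{n-\frac{1}{2}}\,dx=\tfrac{1}{\Delta t}\int_\Omega {\bPi}^{\text{algo}}_{n-\frac{1}{2}}:(\nabla\bu_n-\nabla\bu_{n-1})\,dx$, and the Gonzalez conservation property (\ref{algo}) converts the integrand \emph{exactly} into $\widetilde{W}(\bC_n)-\widetilde{W}(\bC_{n-1})$. Thus this term equals $\tfrac{1}{\Delta t}(P_n-P_{n-1})$ with $P_j=\int_\Omega\widetilde{W}(\bC_j)\,dx$, and together with the inertial term it reconstitutes $\tfrac{1}{\Delta t}(E_n-E_{n-1})$ for $E_j$ as in (\ref{energy_dis}). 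For the load term, the definition (\ref{ff}) of $\fb$ gathers the volume and surface integrals into $\langle\fb_{n-\frac{1}{2}},\dot\bu_{n-\frac{1}{2}}\rangle_{V^*\times V}$; for the contact term I would use that, the foundation being fixed, the time increment of $\delta_\nu$ equals that of $u_\nu$ and that $\dot\bdelta_\tau=\dot\bu_\tau$, so the normal and tangential parts of $\dot\bu_{n-\frac{1}{2}}$ coincide with $\dot\delta_{\nu_{n-\frac{1}{2}}}$ and $\dot\bdelta_{\tau_{n-\frac{1}{2}}}$, reproducing precisely the contact integrand of (\ref{f_dyna_int_cont}).

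Assembling the four contributions, the tested weak form becomes $\tfrac{1}{\Delta t}(E_n-E_{n-1})-\langle\fb_{n-\frac{1}{2}},\dot\bu_{n-\frac{1}{2}}\rangle_{V^*\times V}+\int_{\Gamma_3}[\lambda_{\nu_{n-\frac{1}{2}}}\dot\delta_{\nu_{n-\frac{1}{2}}}+\blambda_{\tau_{n-\frac{1}{2}}}\cdot\dot\bdelta_{\tau_{n-\frac{1}{2}}}]\,da=0$; multiplying by $\Delta t$ and transposing the last two integrals gives the stated balance. The only genuinely delicate step is the internal one, and it is entirely furnished by the already-established identity (\ref{algo}): the construction of ${\bSigma}^{\text{algo}}$ in (\ref{syst}) is designed precisely so that ${\bPi}^{\text{algo}}_{n-\frac{1}{2}}:(\nabla\bu_n-\nabla\bu_{n-1})=\widetilde{W}(\bC_n)-\widetilde{W}(\bC_{n-1})$ holds exactly, so no linearization or extra regularity is needed. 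I would stress that this proposition does not yet invoke the INC modification (\ref{improvement}); the latter enters only afterwards, where it makes the normal contact integral telescope into penetration differences in analogy with the continuous balance (\ref{cons}).
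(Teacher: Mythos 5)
Your proposal is correct and follows essentially the same route as the paper: test (\ref{f_dyna_int_cont}) with $\bv=\dot\bu_{n-\frac{1}{2}}$, use the identity $(\dot\bu_n-\dot\bu_{n-1})\cdot(\dot\bu_n+\dot\bu_{n-1})=|\dot\bu_n|^2-|\dot\bu_{n-1}|^2$ for the kinetic part, and invoke the Gonzalez property (\ref{algo}) for the internal part. Your writing of the load term as $\langle\fb_{n-\frac{1}{2}},\dot\bu_{n-\frac{1}{2}}\rangle_{V^*\times V}$ is in fact the consistent form given the chosen test function, whereas the statement (and the paper's proof) display $\bu_{n-\frac{1}{2}}$ there, which appears to be a typographical slip rather than a substantive difference.
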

		\begin{proof}
			Using the variational formulation (\ref{f_dyna_int_cont}) with
			$$\displaystyle{ \bv=\dot{\bu}_{n-\frac{1}{2}}=\frac{\bu_{n}-{\bu}_{n-1}}{\Delta t}=\frac{\dot{\bu}_{n}+{\dot\bu_{n-1}}}{2}}$$ we get the following equality
			\begin{equation}
				\frac{1}{2\Delta t} \int_{\Omega} \rho( \dot{\bu }_{n}-\dot{\bu }_{n-1})\cdot(\dot{\bu }_{n}+\dot{\bu }_{n-1})\ {\rm{dx}}
					+ \frac{1}{\Delta t}\int_{\Omega} {\bf \Pi}^{\text{algo}}:\nabla({\bu_{n}-\bu_{n-1}})\ \rm {dx} \nonumber
				\end{equation}
				\begin{equation}
					\quad\quad\quad=\langle \fb_{n-\frac{1}{2}},\bu_{n-\frac{1}{2}}\rangle_{V^*\times V} +\int_{\Gamma_3} \big[ \lambda_{ \nu_{n-\frac{1}{2}}} \dot{\delta}_{ \nu_{n-\frac{1}{2}}} +\blambda_{ \tau_n}\cdot\dot{\bdelta}_{\tau_{n-\frac{1}{2}}}\big] \ da
				\end{equation}
				Also, using the identity
				$(\dot{\bu}_{n}-\dot{\bu }_{n-1})\cdot(\dot{\bu }_{n}+\dot{\bu }_{n-1 })=[\dot{\bu }_{n}]^2-[\dot{\bu }_{n-1}]^2$
				and the conservation property of the Gonzalez scheme given in equality (\ref{algo}), we get that
\begin{eqnarray}
	\frac{1}{2 \Delta t} \int_{\Omega} \rho\big( [\dot{\bu }_{n}]^2-[\dot{\bu}_{n-1}]^2\big)\ dx
		+ \frac{1}{\Delta t}\int_{\Omega} \big(\widetilde{W}({\bC}_{n})- \widetilde{W}({\bC}_{n-1})\big) \ dx \nonumber \\
		=\langle \fb_{n-\frac{1}{2}},\bu_{n-\frac{1}{2}}\rangle_{V^* \times V} -\int_{\Gamma_3} \big[ \lambda_{ \nu_{n-\frac{1}{2}}}
		\dot{\delta}_{ \nu_{n-\frac{1}{2}}}+ 
		\blambda_{ \tau_{n-\frac{1}{2}}}\cdot
		\dot{\bdelta}_{ \tau_{n-\frac{1}{2}}} \big] \ da
\end{eqnarray}
By using the definition (\ref{energy_dis}) of the discrete energy at times $t_{n-1}$ and $t_n$, we obtain the assertion.
\end{proof}
Using the previous proposition, we can give an estimate of the discrete energy balance for the contact law with Improved Normal Compliance (\ref{improvement}).
When the external forces are assumed to be zero, by using  $\frac{\delta_{\nu}^{n}-{\delta}_{\nu}^{n-1}}{\Delta t}=\dot{\delta}_{{\nu}_{n-\frac{1}{2}}}$ and considering the formula (\ref{xx}), the energy balance is
	\begin{eqnarray}
	E_n-E_{n-1}=-\int_{\Gamma_3} \frac{c_\nu}{2}\big([(\delta_\nu^n)_{+}]^{\alpha}- [(\delta_\nu^{n-1})_{+}]^{\alpha}\big)\ da-\Delta t\int_{\Gamma_3} \blambda_{ \tau_{n-\frac{ 1}{2}}}\cdot\dot{\bdelta}_{ \tau_{n-\frac{1}{2}}}\ da.
\end{eqnarray}
 We notice that the condition of Improved Normal Compliance allows in the case without friction an evaluation of the discrete energy which is in agreement with the continuous case (\ref{cons}).\\
\textbf{Case without friction}\\
The difference $[(\delta_\nu^n)_{+}]^{\alpha}-[(\delta_\nu^{n-1})_{+}]^{\alpha}$ is very small since the penetrations $(\delta_\nu^n)_{+}$ and $(\delta_\nu^{n-1})_{+}$ are also small. So the energy of the system is ``almost'' conserved, i.e. $E_n\approx E_{n-1}$.\\
\textbf{Case with friction}\\
In this case the product  $\blambda_{ \tau_{n-\frac{1}{2}}}\cdot
\dot{\bdelta}_{ \tau_{n-\frac{1}{2}}}$ is always positive due to the friction law, so
we observe a dissipation of the energy: $E_n\leq E_{n-1}$.  In other words this strategy limits the dissipation of energy between times $t_n $ and $t_{n-1}$. \\
In summary, the INC strategy respects the dissipation in the case of friction and ``almost'' conserves the energy in the case of contact without friction, and this is achieved by limiting the penetration.\\
In the following sections, we will propose both semi-smooth Newton's method as well as the Primal Dual Active Set (PDAS) algorithm in the case of Normal Compliance conditions.		
              
\section{Semi-Smooth Newton approach for solving normal compliance conditions}\label{s4}
 
 The semi-smooth Newton/PDAS methods appear to be one of the most relevant methods for solving frictional contact problems (cf \cite{hint2,wohlcoulomb, wohl}). These methods are based on the following principle: the conditions of contact and friction are reformulated in terms of  non-linear complementarity equations whose solution is provided by the semi-smooth iterative method of Newton \cite{hint2, hint}. To this end, we need the generalized derivative of complementary functions for contact and friction. In practice, the conditions of contact with  Coulomb's friction can be formulated in terms of a fixed point problem related to a quasi-optimization one.  From a purely algorithmic point of view, the main goal of these methods is to separate the nodes potentially in contact into two subsets (active and inactive) and to find the correct subset of all the nodes actually in active contact (subset $ \cal A $), as opposed to those that are inactive (subset $ \cal I $). In practice, the semi-smooth Newton/PDAS methods do not require the use of Lagrange multipliers. In fact, the boundary conditions on the subsets $ \cal A $ and $ \cal I $ are directly enforced thanks to the fixed points found by the semi-smooth Newton method, and consequently, their implementation can be achieved without much effort. 

    \subsection{Standard Normal Compliance (SNC) conditions}\label{semi_snc}
    \subsubsection{Complementary function}\label{comp_sn}
The standard normal compliance contact conditions (\ref{compl}) with $\alpha=2$ can be formulated from the following non-linear complementary function: 
\begin{align}
{\cal C}_{\nu}^{\lambda}(\delta_\nu,\lambda_\nu)=\lambda_\nu - c_\nu [\delta_\nu]_+,\label{complementary_sn}
\end{align}
where we have dropped, for the sake of readability, the time index $n+1$.
     \subsubsection{Generalized derivative of complementary functions}\label{gen_deriv_sn}
     \noindent We provide the generalized derivative of the complementary functions in the gap and contact cases.

 $\bullet$ {Gap case: $\delta_\nu\le 0$}.
 
\noindent According to the complementary function ${\cal C}_{\nu}^{\lambda}(\delta_\nu,\lambda_\nu)=\lambda_\nu$  we have 
\begin{align}
&d_{\delta_\nu} {\cal C}_{\nu}^{\lambda}=0\label{phi11_sn},\\
&d_{\lambda_\nu} {\cal C}_{\nu}^{\lambda}=d{\lambda_\nu}.\label{phi12_sn}
\end{align}

$\bullet$ {Contact case:  $\delta_\nu > 0$}.

\noindent Given the complementary function ${\cal C}_{\nu}^{\lambda}(\delta_\nu,\lambda_\nu)=\lambda_\nu - c_\nu \delta_\nu$, we have
\begin{align}
&d_{\delta_\nu} {\cal C}_{\nu}^{\lambda}=-c_\nu d{\delta_\nu}\label{phi11st_sn},\\
&d_{\lambda_\nu} {\cal C}_{\nu}^{\lambda}=d{\lambda_\nu}\label{phi12st_sn}.
\end{align}

\noindent By combining (\ref{phi11_sn})--(\ref{phi12st_sn}), with the generalized derivative ${\cal D}_{{\cal C}_{\nu}^{\lambda}}$ of ${\cal C}_{\nu}^{\lambda}$, we obtain
\begin{align}
&{\cal D}_{{\cal C}_{\nu}^{\lambda}}(\delta_\nu,\lambda_\nu)(d\delta_\nu,d \lambda_\nu)= - c_\nu({\mathbf 1}_{\text{Contact}})d \delta_\nu + d \lambda_\nu,
\end{align}
where 
\begin{align*}
& {\mathbf 1}_{\text{Contact}} = 0 \ {\rm if}\  \delta_\nu\le 0,\\
& {\mathbf 1}_{\text{Contact}} = 1 \ {\rm if}\ \delta_\nu > 0.
\end{align*}

     \subsubsection{Fixed point conditions from Newton's Semi-Smooth approach}\label{fix_pt_sn}
     
     Using now the semi-smooth Newton formalism (indexed by the superscript $k$) at the current fixed point iterate $(\delta^{k}_\nu,\lambda^{k}_\nu)$ of the complementary functions ${\cal C}_{\nu}^\lambda$, one can derive the new iterate $(\delta^{k+1}_\nu,\lambda^{k+1}_\nu)$ as follows:
\begin{align}
&{\cal D}_{{\cal C}_{\nu}^{\lambda}}(\delta^{k}_\nu,\lambda^{k}_\nu)(\Delta \delta^{k+1}_\nu,\Delta \lambda^{k+1}_\nu)= - {\cal C}_{\nu}^{\lambda} (\delta^{k}_\nu,\lambda^{k}_\nu),\label{G_R_np_sn}\\[2mm]
& (\delta^{k+1}_\nu,\lambda^{k+1}_\nu) =(\delta^{k}_\nu,\lambda^{k}_\nu) +(\Delta \delta^{k+1}_\nu,\Delta \lambda^{k+1}_\nu)\nonumber.
\end{align}

 $\bullet$ {Gap case: $ {\mathbf 1}_{\text{Contact}} = 0$}.

\noindent From the equations (\ref{G_R_np_sn}) we have
\begin{align}
&\lambda^{k+1}_\nu-\lambda^{k}_\nu=-\lambda^{k}_\nu.
\end{align}
Next, the gap conditions of the semi-smooth Newton formalism are as follows
\begin{align}
&\lambda^{k+1}_\nu=0.
\end{align}

 $\bullet$ {Contact case: $ {\mathbf 1}_{\text{Contact}} = 1$}.

\noindent From the equations (\ref{G_R_np_sn}) we have
\begin{align}
&- c_\nu(\delta^{k+1}_\nu-\delta^{k}_\nu) + (\lambda^{k+1}_\nu-\lambda^{k}_\nu) = -\lambda^{k}_\nu + c_\nu \delta^{k}_\nu.
\end{align}
Next, 
\begin{align}
& \lambda^{k+1}_\nu = c_\nu \delta^{k+1}_\nu.
\end{align}

   \subsection{Improved Normal Compliance (INC) conditions}\label{semi_inc}
\subsubsection{Complementary function}\label{comp_in}

The improved normal compliance contact condition (\ref{improvement}) can be formulated from the following non-linear complementary function:
\begin{align}
	{\cal C}_{\nu}^{\lambda}(\delta_\nu^{n+1},\lambda_\nu^{n+1})=\lambda_\nu^{n+1} - [ c_\nu \frac{\alpha}{2} \widetilde \delta^{n+1}_\nu]_+.\label{complementary_in}
\end{align}
  For the sake of readability, from now on, we use this equation: $	{\cal C}_{\nu}^{\lambda}(\delta_\nu,\lambda_\nu)=\lambda_\nu - [ c_\nu \frac{\alpha}{2} \widetilde \delta^{n+1}_\nu]_+$.

\subsubsection{Generalized derivative of complementary functions}\label{gen_deriv_in}
\noindent Now, we provide the generalized derivative of the complementary functions in the gap and contact cases.

$\bullet$ {Gap case: $\widetilde \delta^{n+1}_\nu\le 0$}.

\noindent According to the complementary function ${\cal C}_{\nu}^{\lambda}(\delta_\nu,\lambda_\nu)=\lambda_\nu$  we have the following derivative
\begin{align}
	&d_{\delta_\nu} {\cal C}_{\nu}^{\lambda}=0\label{phi11_in},\\
	&d_{\lambda_\nu} {\cal C}_{\nu}^{\lambda}=d{\lambda_\nu}. \label{phi12_in}
\end{align}

$\bullet$ {Contact case:  $\widetilde \delta^{n+1}_\nu > 0$}.

\noindent Given the complementary functions ${\cal C}_{\nu}^{\lambda}(\delta_\nu,\lambda_\nu)=\lambda_\nu - c_\nu \widetilde \delta_\nu$, we have
\begin{align}
	&d_{\delta_\nu} {\cal C}_{\nu}^{\lambda}=-c_\nu \frac{\alpha (([\delta^{n+1}_\nu]_+)^{\alpha-1} - \widetilde \delta^{n+1}_\nu)}{2(\delta^{n+1}_\nu - \delta^{n}_\nu)} d{\delta_\nu}\label{phi11st_in},\\
	&d_{\lambda_\nu} {\cal C}_{\nu}^{\lambda}=d{\lambda_\nu}\label{phi12st_in}.
\end{align}

\noindent By combining (\ref{phi11_in})--(\ref{phi12st_in}), with ${\cal D}_{{\cal C}_{\nu}^{\lambda}}$  the generalized derivative of ${\cal C}_{\nu}^{\lambda}$, we obtain
\begin{align}
	&{\cal D}_{{\cal C}_{\nu}^{\lambda}}(\delta_\nu,\lambda_\nu)(d \delta_\nu,d \lambda_\nu)= -c_\nu \frac{\alpha (([\delta^{n+1}_\nu]_+)^{\alpha-1} - \widetilde \delta^{n+1}_\nu)}{2(\delta^{n+1}_\nu - \delta^{n}_\nu)} ({\mathbf 1}_{\text{Contact}})d \delta_\nu +  d \lambda_\nu,
\end{align}
where 
\begin{align*}
	& {\mathbf 1}_{\text{Contact}} = 0 \ {\rm if}\  \widetilde \delta^{n+1}_\nu \le 0,\\
	& {\mathbf 1}_{\text{Contact}} = 1 \ {\rm if}\ \widetilde \delta^{n+1}_\nu > 0.
\end{align*}

\subsubsection{Fixed point conditions from Newton's Semi-Smooth approach}\label{fix_pt_in}

Using now the semi-smooth Newton formalism (indexed by the superscript $k$) at the current fixed point iterate $(\delta^{k}_\nu,\lambda^{k}_\nu)$ of the complementary functions ${\cal C}_{\nu}^\lambda$, one can derive the new iterate $(\delta^{k+1}_\nu,\lambda^{k+1}_\nu)$
\begin{align}
	&{\cal D}_{{\cal C}_{\nu}^{\lambda}}(\delta^{k}_\nu,\lambda^{k}_\nu)(\Delta \delta^{k+1}_\nu,\Delta \lambda^{k+1}_\nu)= - {\cal C}_{\nu}^{\lambda} (\delta^{k}_\nu,\lambda^{k}_\nu),\label{G_R_np_in}\\[2mm]
	& (\delta^{k+1}_\nu,\lambda^{k+1}_\nu) =(\delta^{k}_\nu,\lambda^{k}_\nu) +(\Delta \delta^{k+1}_\nu,\Delta \lambda^{k+1}_\nu)\nonumber.
\end{align}

$\bullet$ {Gap case: $ {\mathbf 1}_{\text{Contact}} = 0$}

\noindent From the equations (\ref{G_R_np_in}) we have
\begin{align}
	&\lambda^{k+1}_\nu-\lambda^{k}_\nu=-\lambda^{k}_\nu.
\end{align}
Next, the gap conditions of the semi-smooth Newton formalism are as follows
\begin{align}
	&\lambda^{k+1}_\nu=0.
\end{align}

$\bullet$ {Contact case: $ {\mathbf 1}_{\text{Contact}} = 1$}

\noindent From the equations (\ref{G_R_np_in}) we have
\begin{align}
	& -c_\nu \frac{\alpha (([\delta^{k,n+1}_\nu]_+)^{\alpha-1} - \widetilde \delta^{k,n+1}_\nu)}{2(\delta^{k,n+1}_\nu - \delta^{n}_\nu)} (\delta^{k+1,n+1}_\nu-\delta^{k,n+1}_\nu) + (\lambda^{k+1}_\nu-\lambda^{k}_\nu) = -\lambda^{k}_\nu +  c_\nu \frac{\alpha}{2} \widetilde \delta^{k,n+1}_\nu.
\end{align}
Next, 
\begin{align}
	& \lambda^{k+1}_\nu =  c_\nu \frac{\alpha}{2} \widetilde \delta^{k,n+1}_\nu + c_\nu \frac{\alpha (([\delta^{k,n+1}_\nu]_+)^{\alpha-1} - \widetilde \delta^{k,n+1}_\nu)}{2(\delta^{k,n+1}_\nu - \delta^{n}_\nu)} (\delta^{k+1,n+1}_\nu-\delta^{k,n+1}_\nu).
\end{align}

   \subsection{Compliance for friction conditions}\label{semi_fcc}
\subsubsection{Complementary function}\label{comp_fcf}

The  compliance for friction conditions (\ref{compl1}) can be formulated from the following non-linear complementary function ${\cal C}_{\tau}^{\lambda}(\delta_\nu^{n+1},\dot\bdelta_\tau^{n+1},\lambda_\nu^{n+1},\blambda_\tau^{n+1})$
\begin{align}\label{ccoulomb}
	&\hspace{-2mm}{\cal C}_{\tau}^{\lambda}(\delta_\nu^{n+1},\dot\bdelta_\tau^{n+1},\lambda_\nu^{n+1},\blambda_\tau^{n+1})=\max ( \mu \lambda_{\nu}^{n+1}, \| c_\tau \dot\bdelta_{\tau}^{n+1}\|)\blambda_{\tau}^{n+1}- \mu \lambda_{\nu}^{n+1}(c_\tau \dot\bdelta_{\tau}^{n+1}).
\end{align}
For the sake of readability, in the following we use this equation: $${\cal C}_{\tau}^{\blambda}(\delta_{\nu},\dot\bdelta_{\tau},\lambda_{\nu},\blambda_{\tau})=\max ( \mu \lambda_{\nu}, \|c_\tau \dot\bdelta_{\tau}\|)\blambda_{\tau}- \mu \lambda_{\nu}(c_\tau \dot\bdelta_{\tau}).$$

\subsubsection{Generalized derivative of complementary functions}\label{gen_deriv_in}
\noindent Now, we provide the generalized derivative of the complementary function in the gap and friction cases.\\
$\bullet$ {Gap case}: $ \blambda_{\tau} = \mathbf 0$, ${\cal C}_{\tau}^{\blambda}(\delta_{\nu},\dot\bdelta_{\tau},\lambda_{\nu},\blambda_{\tau})= \mathbf 0$. \\[0.2cm]
$\bullet$ {Stick case}: $\|\blambda_{\tau}\| < \mu \lambda_{\nu}$:\\
\begin{align*}
	&{\cal C}_{\tau}^{\blambda}(\delta_{\nu},\dot\bdelta_{\tau},\lambda_{\nu},\blambda_{\tau})=\mu \lambda_{\nu} \blambda_{\tau}  -\mu c_\tau \lambda_{\nu}\dot\bdelta_{\tau}.
\end{align*}
Then
\begin{align}
	&d_{\delta_{\nu}} {\cal C}_{\nu}^{\blambda}=0, \label{phi21st}\\
	&d_{\dot\bdelta_{\tau}} {\cal C}_{\tau}^{\blambda}=-\mu c_\tau \lambda_{\nu}d\dot\bdelta_{\tau},\label{phi22st}\\
	&d_{\lambda_{\nu}} {\cal C}_{\tau}^{\blambda}=(\mu \blambda_{\tau}-\mu c_\tau \dot\bdelta_{\tau})d{\lambda_{\nu}},\label{phi23st}\\
	&d_{\blambda_{\tau}} {\cal C}_{\tau}^{\blambda}= \mu \lambda_{\nu} d{\blambda_{\tau}}\label{phi24st}.
\end{align}
$\bullet$ {Slip case}: $\|\blambda_{\tau}\| \ge \mu \lambda_{\nu}$
\begin{align*}
	&{\cal C}_{\tau}^{\blambda}(\delta_{\nu},\dot\bdelta_{\tau},\lambda_{\nu},\blambda_{\tau})=\|c_\tau \dot\bdelta_{\tau}\|\blambda_{\tau}- \mu\lambda_{\nu} (c_\tau \dot\bdelta_{\tau}).
\end{align*}
Then
\begin{align}
	&d_{\delta_{\nu}} {\cal C}_{\tau}^{\blambda}=0,\label{phi21sl}\\
	&d_{\dot\bdelta_{\tau}} {\cal C}_{\tau}^{\blambda}=\Big(c_\tau \blambda_{\tau}\frac{( \dot\bdelta_{\tau})^T}{\| \dot\bdelta_{\tau}\|}-\mu c_\tau \lambda_{\nu}\bI_2\Big)d{\dot\bdelta_{\tau}},\label{phi22sl}\\
	&d_{\lambda_{\nu}} {\cal C}_{\tau}^{\blambda}=- \mu c_\tau \dot\bdelta_{\tau}d\lambda_{\nu},\label{phi23sl}\\
	&d_{\blambda_{\tau}} {\cal C}_{\tau}^{\blambda}=\|c_\tau \dot\bdelta_{\tau}\|d{\blambda_{\tau}}\label{phi24sl}.
\end{align}

\subsubsection{Fixed point conditions from Newton's Semi-Smooth approach}\label{fix_pt_in}

\noindent By combining (\ref{phi21st})--(\ref{phi24sl}), with ${\cal G}_{{\cal C}_{\tau}^{\blambda}}$ the generalized derivative of  ${\cal C}_{\tau}^{\blambda}$, respectively, we obtain
\begin{align}
	&{\cal G}_{{\cal C}_{\tau}^{\blambda}}(\delta_{\nu},\dot\bdelta_{\tau},\lambda_{\nu},\blambda_{\tau})(\Delta \delta_{\nu},\Delta\dot\bdelta_{\tau},\Delta\lambda_{\nu},\Delta\blambda_{\tau})= 
	 {\mathbf 1}_{\text{Stick}} \Big( \mu \blambda_{\tau} \Big)\Delta{\lambda_{\nu}} \nonumber\\
	&+ {\mathbf 1}_{\text{Slip}} \Big(c_\tau \blambda_{\tau}\frac{( \dot\bdelta_{\tau})^T}{\| \dot\bdelta_{\tau}\|}\Big)\Delta{\dot\bdelta_{\tau}}  - \mu c_\tau \dot\bdelta_{\tau}\Delta\lambda_{\nu} -\mu c_\tau \lambda_{\nu}\Delta{\dot\bdelta_{\tau}} \nonumber\\
	& + {\mathbf 1}_{\text{Stick}}\Big(\mu\lambda_{\nu}\Big)\Delta{\blambda_{\tau}}  + {\mathbf 1}_{\text{Slip}}\Big(\|c_\tau \dot\bdelta_{\tau}\| \Big)\Delta{\blambda_{\tau}} \nonumber
\end{align}
where 
\begin{align*}
	& {\mathbf 1}_{\text{Stick}} = 1, {\mathbf 1}_{\text{Slip}} = 0\ {\rm if}\ \|\blambda_{\tau}\| < \mu \lambda_{\nu},\\
	& {\mathbf 1}_{\text{Stick}} = 0,  {\mathbf 1}_{\text{Slip}} = 1\ {\rm if}\ \|\blambda_{\tau}\| \ge \mu \lambda_{\nu}.
\end{align*}
Using now the semi-smooth Newton formalism at the current iterate $(\delta_{\nu}^{(k)},\dot\bdelta_{\tau}^{(k)},\lambda_{\nu}^{(k)},\blambda_{\tau}^{(k)})$, one can derive the new iterate $(\delta_{\nu}^{(k+1)},\dot\bdelta_{\tau}^{(k+1)},\lambda_{\nu}^{(k+1)},\blambda_{\tau}^{(k+1)})$ as follows:
\begin{align}
	&{\cal G}_{{\cal C}_{\tau}^{\blambda}}(\delta_{\nu}^{(k)},\dot\bdelta_{\tau}^{(k)},\lambda_{\nu}^{(k)},\blambda_{\tau}^{(k)})(\Delta \delta_{\nu}^{(k+1)},\Delta\dot\bdelta_{\tau}^{(k+1)},\Delta\lambda_{\nu}^{(k+1)},\Delta\blambda_{\tau}^{(k+1)})
	= - {\cal C}_{\tau}^{\blambda} (\delta_{\nu}^{(k)},\dot\bdelta_{\tau}^{(k)},\lambda_{\nu}^{(k)},\blambda_{\tau}^{(k)})\nonumber,\\[2mm]
	& (\delta_{\nu}^{(k+1)},\dot\bdelta_{\tau}^{(k+1)},\lambda_{\nu}^{(k+1)},\blambda_{\tau}^{(k+1)})
	 =(\delta_{\nu}^{(k)},\dot\bdelta_{\tau}^{(k)},\lambda_{\nu}^{(k)},\blambda_{\tau}^{(k)}) +(\Delta \delta_{\nu}^{(k+1)},\Delta\dot\bdelta_{\tau}^{(k+1)},\Delta\lambda_{\nu}^{(k+1)},\Delta\blambda_{\tau}^{(k+1)})\nonumber.
\end{align}

\noindent$\bullet$ {Stick case}: $ {\mathbf 1}_{\text{Stick}} = 1, {\mathbf 1}_{\text{Slip}} = 0$.

We have
\begin{align*}
	& -\mu c_\tau \lambda_{\nu}^{(k)}(\dot\bdelta_{\tau}^{(k+1)}-\dot\bdelta_{\tau}^{(k)}) +  \mu\lambda_{\nu}^{(k)} (\blambda_{\tau}^{(k+1)}-\blambda_{\tau}^{(k)}) + (\mu \blambda_{\tau}^{(k)}-\mu c_\tau \dot\bdelta_{\tau}^{(k)}) (\lambda_{\nu}^{(k+1)}-\lambda_{\nu}^{(k)}) \\
	& = - \mu\lambda_{\nu}^{(k)} (\blambda_{\tau}^{(k)} +\mu c_\tau \lambda_{\nu}^{(k)}\dot\bdelta_{\tau}^{(k)}.
\end{align*}
Next, with  $\blambda_{\tau}^{(k)} = c_\tau \dot\bdelta_{\tau}^{(k)}$, we obtain
\begin{align}
	& \blambda_{\tau}^{(k+1)} =c_\tau \dot\bdelta_{\tau}^{(k+1)}.
\end{align}

\noindent$\bullet$ {Slip case}: $ {\mathbf 1}_{\text{Stick}} = 0, {\mathbf 1}_{\text{Slip}} = 1$.

We obtain
\begin{align}
	&\Big(c_\tau \blambda_{\tau}^{(k)}\frac{( \dot\bdelta_{\tau}^{(k)})^T}{\| \dot\bdelta_{\tau}^{(k)}\|}-\mu c_\tau \lambda_{\nu}^{(k)}\bI_2\Big)(\dot\bdelta_{\tau}^{(k+1)}-\dot\bdelta_{\tau}^{(k)}) \label{phi2c}\\
	&  - \mu (c_\tau \dot\bdelta_{\tau}^{(k)})(\lambda_{\nu}^{(k+1)}-\lambda_{\nu}^{(k)}) +\Big( \|c_\tau \dot\bdelta_{\tau}^{(k)}\|  \Big)(\blambda_{\tau}^{(k+1)}-\blambda_{\tau}^{(k)})\nonumber\\
	&= -\|c_\tau \dot\bdelta_{\tau}^{(k)}\|\blambda_{\tau}^{(k)}+ \mu\lambda_{\nu}^{(k)} (c_\tau \dot\bdelta_{\tau}^{(k)})\nonumber.
\end{align}
Therefore, after an elementary computation, with  $\blambda_{\tau}^{(k)} =\mu \lambda_{\nu}^{(k)} \frac{ \dot\bdelta_{\tau}^{(k)}}{\| \dot\bdelta_{\tau}^{(k)}\|}$ we have 
\begin{align*}
	&\blambda_{\tau}^{(k+1)} =  \mu\lambda_{\nu}^{(k+1)} \frac{ \dot\bdelta_{\tau}^{(k)}}{\| \dot\bdelta_{\tau}^{(k)}\|} - \Big( \blambda_{\tau}^{(k)}\frac{( \dot\bdelta_{\tau}^{(k)})^T}{\| \dot\bdelta_{\tau}^{(k)}\|^2}-\mu \lambda_{\nu}^{(k)} \frac{\bI_2}{\| \dot\bdelta_{\tau}^{(k)}\|} \Big) (\dot\bdelta_{\tau}^{(k+1)}-\dot\bdelta_{\tau}^{(k)})\nonumber.
\end{align*}
For a two dimensional case, one obtains a simplified version of the previous condition: 
\begin{align*}
	&\blambda_{\tau}^{(k+1)} =  \mu\lambda_{\nu}^{(k+1)} \frac{ \dot\bdelta_{\tau}^{(k)}}{\| \dot\bdelta_{\tau}^{(k)}\|}\nonumber.
\end{align*}
For more details regarding the obtention of such a simplified version, we refer to the proof provided in \cite{ABCD2}.
     
\section{Primal-Dual Active Set methods}\label{s5}
    This section is devoted to the numerical treatment of the contact conditions using a Primal-Dual Active Set method within the framework of dynamic contact problems. After defining the active and inactive subsets of all nodes that are potentially in contact, we compute the contact conditions on each subset only in terms of contact reaction, using the local general equations of motion.

\subsection{Primal-Dual Active Set method for standard normal compliance}\label{activ_snc}
Let us denote by ${\cal S}$ the set of potential contact  and $\gamma$ a potential contact node  belonging to ${\cal S}$. The standard normal contact condition  (\ref{compl}) with $\alpha=2$ is enforced by applying an active set strategy which derives directly from the computation of the fixed point on the non-linear complementary functions ${\cal C}_{\nu}^{\blambda}$ and ${\cal C}_{\tau}^{\blambda}$  based on the Newton semi-smooth scheme. The active and inactive sets are defined as follows

\begin{align*}
&{\cal A}_{\nu}^{k+1}=\{\gamma\in {\cal S}: \delta^{\gamma,k}_\nu \geq 0\},\\
&{\cal I}_{\nu}^{k+1}=\{\gamma\in {\cal S}:\delta^{\gamma,k}_\nu < 0\},\\
&{\cal A}_{\tau}^{k+1}=\{p\in {\cal S}:\|\blambda_{\tau}^{\gamma,k}\| < \mu \lambda_{\nu}^{\gamma,k}\},\\
&{\cal I}_{\tau}^{k+1}=\{p\in {\cal S}:\|\blambda_{\tau}^{\gamma,k}\| \ge \mu \lambda_{\nu}^{\gamma,k}\}.
\end{align*}
\noindent The status of a given potential $\gamma$ at the non-linear iteration $k$ depends on the set it belongs to. It can be either in the non-contact or frictional contact status (either stick or slip status). It yields the following Algorithm \ref{PDAS_SNC}\\

\begin{algorithm}
	\caption{PDAS for standard normal compliance}\label{PDAS_SNC}
\qquad(i) Choose $(\bdelta^{(0)},\blambda^{(0)})$, $c_{\nu}>0$, $c_{\tau}>0$  and set $k=0$.

\qquad(ii) Compute: $\tau^{\gamma}_\nu =  \delta^{\gamma,k}_\nu$ and  $\tau^{\gamma}_\tau =  - \|\blambda_{\tau}^{\gamma,k}\| + \mu \lambda_{\nu}^{\gamma,k}$ \ \ for each $\gamma \in {\cal S}$.

\qquad(iii) Set the active and inactive sets:
\begin{align*}
&{\cal A}_{\nu}^{k+1}=\{\gamma\in {\cal S}:\tau^{\gamma}_\nu \geq 0\},\\
&{\cal I}_{\nu}^{k+1}={\cal S}\setminus {\cal A}_{\nu}^{k+1}, \\
&{\cal A}_{\tau}^{k+1}=\{p\in {\cal S}:\tau^{\gamma}_\tau > 0 \},\\
&{\cal I}_{\tau}^{k+1}={\cal S}\setminus {\cal A}_{\tau}^{k+1}.
\end{align*}

\qquad(iv) Find $(\bdelta^{\gamma,k+1},\blambda^{\gamma,k+1 })$ such that
\begin{eqnarray}
	&\label{Inu_exact_1}\lambda^{\gamma,k+1}_{\nu}=0, \qquad \blambda^{k+1}_{\tau,p}=\bzero \quad \forall \gamma \in {\cal I}_{\nu}^{k+1},\\[1mm]
&\label{Anu_exact_1}\lambda^{\gamma,k+1}_{\nu}=c_\nu  \delta^{\gamma,k+1}_\nu \qquad \forall \gamma \in {\cal A}_{\nu}^{k+1},\\[1mm]
&\label{Atau_1} \blambda_{\tau}^{\gamma,k+1} = c_\tau \dot\bdelta_{\tau}^{\gamma,k+1} \quad\forall  \gamma \in {\cal A}_{\tau}^{k+1}\cap {\cal A}_{\nu}^{k+1},\\[1mm]
&\label{Itau_1} \blambda_{\tau}^{\gamma,k+1} =  \mu\lambda_{\nu}^{\gamma,k+1} \frac{ \dot\bdelta_{\tau}^{\gamma,k}}{\| \dot\bdelta_{\tau}^{\gamma,k}\|} - \Big( \blambda_{\tau}^{\gamma,k}\frac{( \dot\bdelta_{\tau}^{\gamma,k})^T}{\| \dot\bdelta_{\tau}^{\gamma,k}\|^2}-\mu \lambda_{\nu}^{\gamma,k} \frac{\bI_2}{\| \dot\bdelta_{\tau}^{\gamma,k}\|} \Big) (\dot\bdelta_{\tau}^{\gamma,k+1}-\dot\bdelta_{\tau}^{\gamma,k}) \ \forall \gamma \in {\cal I}_{\tau}^{k+1}\cap {\cal A}_{\nu}^{k+1}. \label{robin}
\end{eqnarray}

\qquad(v) If $\|(\bdelta^{\gamma,k+1},\blambda^{\gamma,k+1})-(\bdelta^{\gamma, k},\blambda^{\gamma,k})\|\leq\epsilon$, ${\cal A}_{\nu}^{k+1}={\cal A}_{\nu}^{k}$ and ${\cal A}_{\tau}^{k+1}={\cal A}_{\tau}^{k}$  stop, else go to (ii).
\end{algorithm}

\subsection{Primal-Dual Active Set method for improved normal compliance}\label{activ_inc}
Likewise, using similar notation, the improved contact condition (\ref{improvement}) are enforced by applying another active set strategy yielding a different Algorithm \ref{PDAS_INC}, described below.

\begin{algorithm}
	\caption{PDAS for improved normal compliance}\label{PDAS_INC}
\qquad(i) Choose $(\bdelta^{(0)},\blambda^{(0)})$, $c_{\nu}>0$, $c_{\tau}>0$  and set $k=0$.

\qquad(ii) Compute: $\tau^{\gamma}_\nu =  \widetilde \delta^{\gamma,k,n+1}_\nu$ and  $\tau^{\gamma}_\tau =  - \|\blambda_{\tau}^{\gamma,k}\| + \mu \lambda_{\nu}^{\gamma,k}$ for each $\gamma \in {\cal S}$.

\qquad(iii) Set the active and inactive sets:
\begin{align*}
	&{\cal A}_{\nu}^{k+1}=\{\gamma\in {\cal S}:\tau^{\gamma}_\nu \geq 0\},\\
	&{\cal I}_{\nu}^{k+1}={\cal S}\setminus {\cal A}_{\nu}^{k+1}, \\
	&{\cal A}_{\tau}^{k+1}=\{p\in {\cal S}:\tau^{\gamma}_\tau > 0 \},\\
	&{\cal I}_{\tau}^{k+1}={\cal S}\setminus {\cal A}_{\tau}^{k+1}.
\end{align*}

\qquad(iv) Find $(\bdelta^{\gamma,k+1},\blambda^{\gamma,k+1 })$ such that
\begin{eqnarray}
	&\label{Inu_exact_2}\lambda^{\gamma,k+1}_{\nu}=0, \qquad \blambda^{\gamma,k+1}_{\tau,p}=\bzero \qquad\forall \gamma\in {\cal I}_{n}^{k+1},\\[1mm]
	&\label{Anu_exact_2}\lambda^{\gamma,k+1}_{\nu}= c_\nu \frac{\alpha}{2} \widetilde \delta^{\gamma,k,n+1}_\nu + c_\nu \frac{\alpha (([\delta^{\gamma,k,n+1}_\nu]_+)^{\alpha-1} - \widetilde \delta^{\gamma,k,n+1}_\nu)}{2(\delta^{\gamma,k,n+1}_\nu - \delta^{\gamma,n}_\nu)} (\delta^{\gamma,k+1,n+1}_\nu-\delta^{\gamma,k,n+1}_\nu) \ \ \forall \gamma \in {\cal A}_{\nu}^{k+1}, \\[1mm]
	&\label{Atau_2} \blambda_{\tau}^{\gamma,k+1} = c_\tau \dot\bdelta_{\tau}^{\gamma,k+1} \quad \forall\gamma \in {\cal A}_{\tau}^{k+1}\cap {\cal A}_{\nu}^{k+1},\\[1mm]
	&\label{Itau_2} \blambda_{\tau}^{\gamma,k+1} =  \mu\lambda_{\nu}^{\gamma,k+1} \frac{ \dot\bdelta_{\tau}^{\gamma,k}}{\| \dot\bdelta_{\tau}^{\gamma,k}\|} - \Big( \blambda_{\tau}^{\gamma,k}\frac{( \dot\bdelta_{\tau}^{\gamma,k})^T}{\| \dot\bdelta_{\tau}^{\gamma,k}\|^2}-\mu \lambda_{\nu}^{\gamma,k} \frac{\bI_2}{\| \dot\bdelta_{\tau}^{\gamma,k}\|} \Big) (\dot\bdelta_{\tau}^{\gamma,k+1}\!\!-\dot\bdelta_{\tau}^{\gamma,k}) \ \ \forall \gamma \in {\cal I}_{\tau}^{k+1}\cap {\cal A}_{\nu}^{k+1}. \label{robin}
\end{eqnarray}

\qquad(v) If $\|(\bdelta^{\gamma,k+1},\blambda^{\gamma,k+1})-(\bdelta^{\gamma, k},\blambda^{\gamma,k})\|\leq\epsilon$, ${\cal A}_{\nu}^{k+1}={\cal A}_{\nu}^{k}$ and ${\cal A}_{\tau}^{k+1}={\cal A}_{\tau}^{k}$ stop, else goto (ii).
\end{algorithm}

Note that the main difference between Algorithms \ref{PDAS_SNC} and \ref{PDAS_INC} lies in step (iv), regarding the normal compliance handling (respectively for \ref{Anu_exact_1} and \ref{Anu_exact_2}), as one would expect.  For the convergence of this type of Semi-Smooth Newton algorithm, we can refer to the following papers: [2], [20], [23].

\section{Numerical experiments}\label{s6}

In what follows, we carry out a comparative study of methods of the energy conservation type, focusing more particularly on the behavior
of the discrete energy of the system during and after the impact.
To do this, we consider two numerical examples, the first concerns
the impact of an elastic ball and the second represents the impact of a
hyperelastic ring. The goal is to demonstrate that the Active Set--Improved Normal Compliance method respects the conservation of energy after impact and practically guarantees non-penetration.
\subsection{Impact of a linear elastic ball against a foundation}
This representative benchmark problem describes the frictionless impact of a linear elastic ball against a foundation. The elastic ball is launched with an initial velocity ($\bu_1=(0,-10)\,m/s$) towards the foundation
$\left\{(x_1,x_2)\in \mathbb{R}^2:\ x_2 \leq 0 \right\}$.
The domain $\Omega$ represents the cross-section of the ball, under the assumption of plane stress. 
The behavior of the material is described by a linear elastic constitutive law defined by the energy density
\begin{equation}
	W^e(\bvarepsilon)=\frac{\lambda^*}{2}({\mathrm{tr}}\,
	\bvarepsilon)^2 +\mu \,{\mathrm{tr}}(\bvarepsilon^2), \quad \forall
	\bvarepsilon \in \mathbb{M}^d.
\end{equation}\label{elastic_density}
with $$\displaystyle\lambda^*=\frac{2\lambda\mu}{\lambda+2\mu},\quad \mu=\frac{E\kappa}{2(1+\kappa)} \quad and \quad \lambda=\frac{E\kappa}{2(1+\kappa)(1-2\kappa)}$$
Here, $E$ and $\kappa$ are respectively the Young modulus and the Poisson ratio of the material and $\mathrm{tr}(\cdot)$ is the trace operator. Note that $\bvarepsilon = \frac{1}{2} ({\nabla \bu}^T + \nabla \bu) $ represents the linearized strain tensor within the framework of the theory of small strains ($\|\bu\| \ll 1$ and
$\|\nabla\bu\| \ll 1$ in $\Omega$).
The physical setting is shown in Figure \ref{fig_num1}. Here:
\begin{align*}
	& \Omega\, =\left\{(x_1,x_2)\in \mathbb{R}^2: \ (x_1-100)^2
	+ (x_2-100)^2 \leq 100 \right\},\\
	&\Gamma_1 = \varnothing,\quad \Gamma_2= \varnothing,\\
	&\Gamma_3=\left\{(x_1,x_2)\in \mathbb{R}^2: \ (x_1-100)^2
	+ (x_2-100)^2 = 100 \right\}.
\end{align*}
\begin{figure}[!h]
	\begin{center}
		\includegraphics[width=5cm]{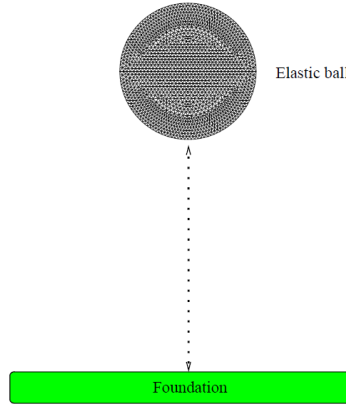}
	\end{center}
	\caption{Discretization of the elastic ball in contact with a foundation.}
	\label{fig_num1}
\end{figure}
We assume that the volume forces do not act on the body during the process. For the discretization of the problem of contact represented in Figure \ref{fig_num1}, we use 7820 elastic nodes. For numerical experiments, we use the following data:
\begin{eqnarray}
	\begin{array}{l}
		\rho = 1000\, kg/m^3, \quad T=2\,s, \quad \Delta t=0.001\,s, \nonumber \\[2mm]
		\bu_0=(0,0)\,m, \quad \bu_1=(0,-10)\,m/s, \nonumber \\[2mm]
		E= 100\,GPa,\quad \kappa=0.35, \quad {\fb}_0=(0,0)\,Pa, \nonumber \\[2mm]
		g=50\,m,  \quad \mu = 0. \nonumber
	\end{array}
\end{eqnarray}
In Figure \ref{defcont}, the successive positions of the deformed ball as well as the contact forces are represented before, during, and after the impact.
\begin{figure}[!h]
	\begin{center}
		\includegraphics[width=11.5cm]{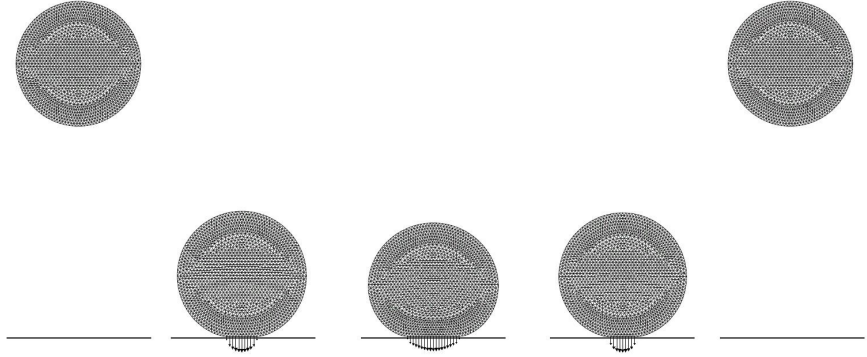}
	\end{center}
	\caption{Sequence of deformed ball and contact forces before, during, and after impact.}
	\label{defcont}
\end{figure}
The interest of this representative example lies in the comparison of the numerical results of the Improved Normal Compliance--PDAS methods  ($\alpha=2$) with other classical numerical methods. For this, we consider five existing methods: \\
\hspace*{0.5cm}- The classical quasi-Lagrangian method with the Signorini contact condition.\\
\hspace*{0.5cm}- The penalty method with a standard normal compliance condition of the form ${\lambda_{{\boldsymbol \nu}}} = c_\nu ({\delta}_{\boldsymbol \nu})_+$. \\
\hspace*{0.5cm}- The Equivalent Mass Matrix (EMM) method proposed by Khenous [32], which represents a specific distribution of the mass matrix without any inertia of the contact nodes.
This method is characterized by relevant stability properties of the contact stress.\\
\hspace*{0.5cm}- Newton's adapted continuity method, developed by Ayyad and Barboteu [2], which is characterized by enforcing, after two steps, the unilateral contact law and the persistence condition during each time increment.\\
\hspace*{0.5cm}- The Active Set method with the persistent contact law.
\begin{figure}[!h]
	\begin{center}
		\includegraphics[width=18cm]{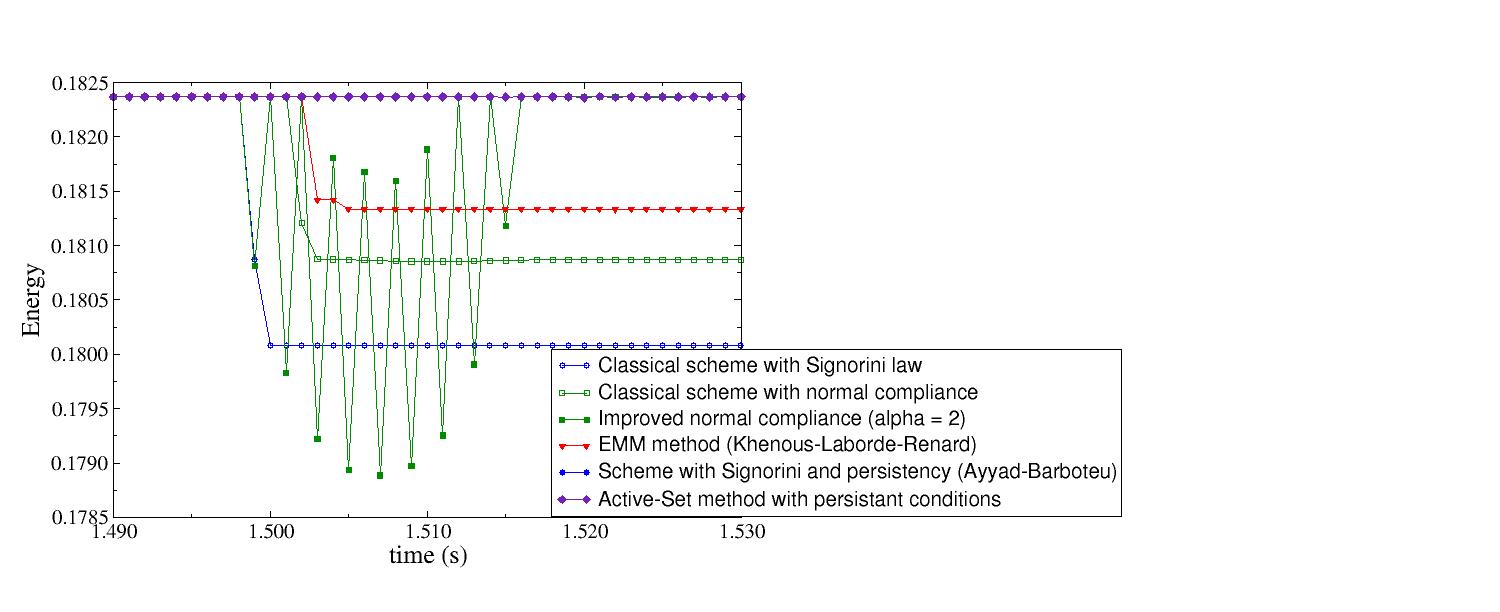}
	\end{center}
	\caption{Discrete energy behavior of selected time integration schemes during impact ($\Delta t=0.001$, $c_\nu=\frac{1}{1000}$).}
\label{ene_ball_1}
\end{figure}

In what follows, we analyze the methods in terms of discrete energy evolution. For this we introduce the total discrete energy at time $t_{n}$ which is given by the following formula:
$$
E_{n} = \frac{1}{2} \int_{\Omega} \rho |\dot{\bu}^{2}_{n}| d\bx +
\int_{\Omega} {\boldsymbol \sigma}^e_{n}:
\bvarepsilon(\bu_{n}) d\bx,
$$
where ${\boldsymbol \sigma}^e= \frac{\partial W^e}{\partial \bvarepsilon}$ denotes the stress tensor for infinitesimal strains.\\
Figure \ref{ene_ball_1} represents the evolution of the total discrete energy of the dynamical system. We note that after the impact (i.e. for $t \ge 1.52\ s$) and for the considered time step $ \Delta t = 1.0\ e^{-3}\ s$, the classical quasi-Lagrangian method with Signorini's law (curve $\circleddash$) as well as the method with standard normal compliance condition (curve $\boxminus$) are characterized by non-conservation of energy, which is not realistic from a physical point of view. We also notice that the EMM method (curve $\blacktriangledown$) strongly reduces the energy dissipation, without obtaining the exact conservation. It turns out the schemes developed by Ayyad and Barboteu [1]
(curve {\Large\textbullet}) and the Improved Normal Compliance--PDAS method (curve~$\blacksquare$) do conserve energy after impact. However, for the penalized method, we find energy fluctuations which disappear after the impact. Besides. for the penalized method and the method used in [1], the unilateral contact is not exactly satisfied, see Table \ref{tab}. Indeed, the penalized method (standard normal compliance) generates a maximum error on the displacement of normal contact of $1.4 e^{-4}\ m$ and $5.1 e^{-3}\ m$ for the method of Ayyad and Barboteu [1]. The Improved Normal Compliance--PDAS method allows to obtain a better energy conservation and in addition to limit the penetration: $5.7 e^{-4}\ m$. The Active Set method for persistent contact (shown by the curve~$\blacklozenge$) enforces exactly the energy conservation without any fluctuation. Due to the ``leapfrog" time step predictor, this Active Set method generates a maximum error on the normal contact displacement of $1.54 e^{-2}\ m$, larger than all the other methods.
\begin{table}[!ht]
\begin{center}
\begin{tabular}{lc}
	\hline
	Methods & Maximum error on the $\delta_\nu\ (m)$ \\
	\hline
	Quasi-Lagrangian with Signorini law & 0. \\
	Scheme with Signorini and persistent (Ayyad--Barboteu) & $5.1 e^{-3}$ \\
	Active set with persistent conditions & $1.54 e^{-2}$ \\
	Active set with classical normal compliance & $1.4 e^{-4}$ \\
	Active set with improved normal compliance ($\alpha = 2$) & $5.7 e^{-4}$ \\
	Active set with improved normal compliance ($\alpha = 3$ )& $8.05 e^{-3}$ \\
	\hline
\end{tabular}
\end{center}
\caption{Maximum error on normal contact displacement ($\Delta t=0.001$, $c_\nu=1 e^{3}$).}
\label{tab}
\end{table}
\begin{figure}[h!]
	\begin{center}
		\includegraphics[width=18cm]{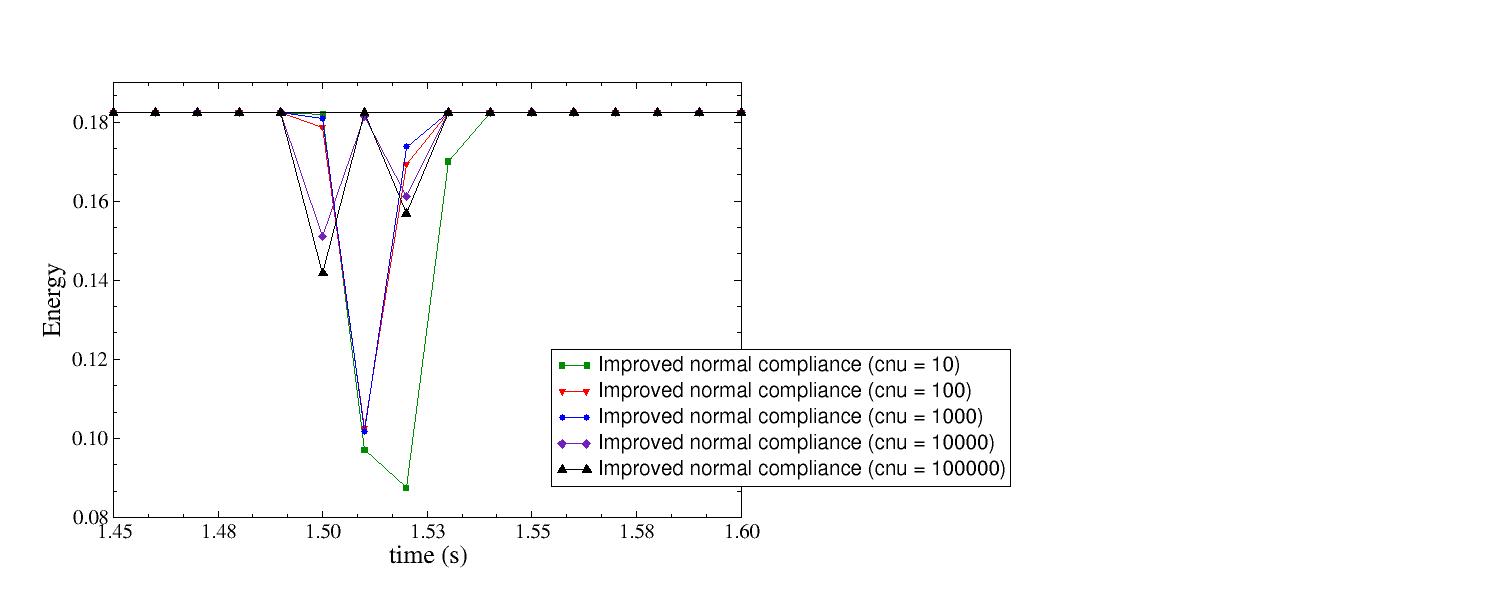}
	\end{center}
	\caption{Discrete energy behavior of the Active Set scheme with Improved Normal Compliance ($\alpha =3$) as a function of $c_\nu$ ($\Delta t=0.01$).}
	\label{ene_ball_2}
\end{figure}
In order to overcome these difficulties, we consider the Active~Set method with improved normal compliance, by analyzing the behavior of discrete energy with respect to several parameters ($c_\nu$, $\alpha$, $\Delta t$). \\
Figure~\ref{ene_ball_2} allows to evaluate the influence of the normal compliance parameter $c_\nu$. We see that, for $c_\nu>1e^{4}$, this method respects the conservation of energy after impact and practically guarantees the non-penetration for $\alpha=2$. Nevertheless, this method generates slight fluctuations of the discrete energy during the impact. For $c_\nu<1e^{4}$, the Active Set method with improved normal compliance produces more fluctuations which can be explained by energy dissipation during the impact, but the energy of the system is recovered and conserved after the impact.
\begin{figure}[!h]
\begin{center}
\includegraphics[width=16cm]{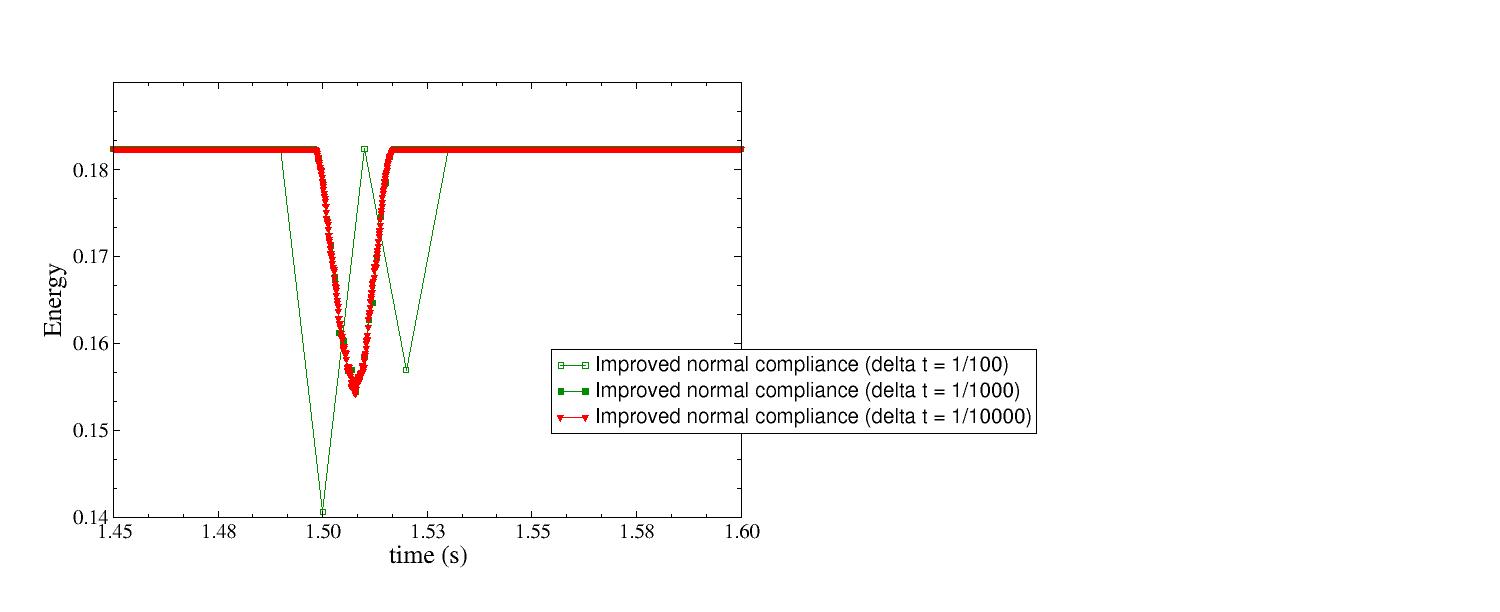}
\end{center}
\caption{Discrete energy behavior of the Active Set scheme with Improved Normal compliance ($\alpha =3$) as a function of $\Delta t$ ($c_\nu={1 e^{3}}$).}
\label{ene_ball_3}
\end{figure}
In Figure \ref{ene_ball_3}, we analyze the discrete energy behavior of the Active Set scheme for the improved normal compliance condition for different time steps. For $\Delta t$ varying from $1e^{-2}\ s$ to $1e^{-4}\ s$, we can notice that the obtained numerical results are similar and this method is characterized by a dissipation of
energy during the impact, but after the impact the energy of the system is conserved. For $\Delta t=1e^{-2}\ s$, the method generates even more fluctuations during impact. So taking $\Delta t$ below $1e^{-3}\ s$ does not guarantee the minimization of fluctuations.
\begin{figure}[!h]
\begin{center}
\includegraphics[width=18cm]{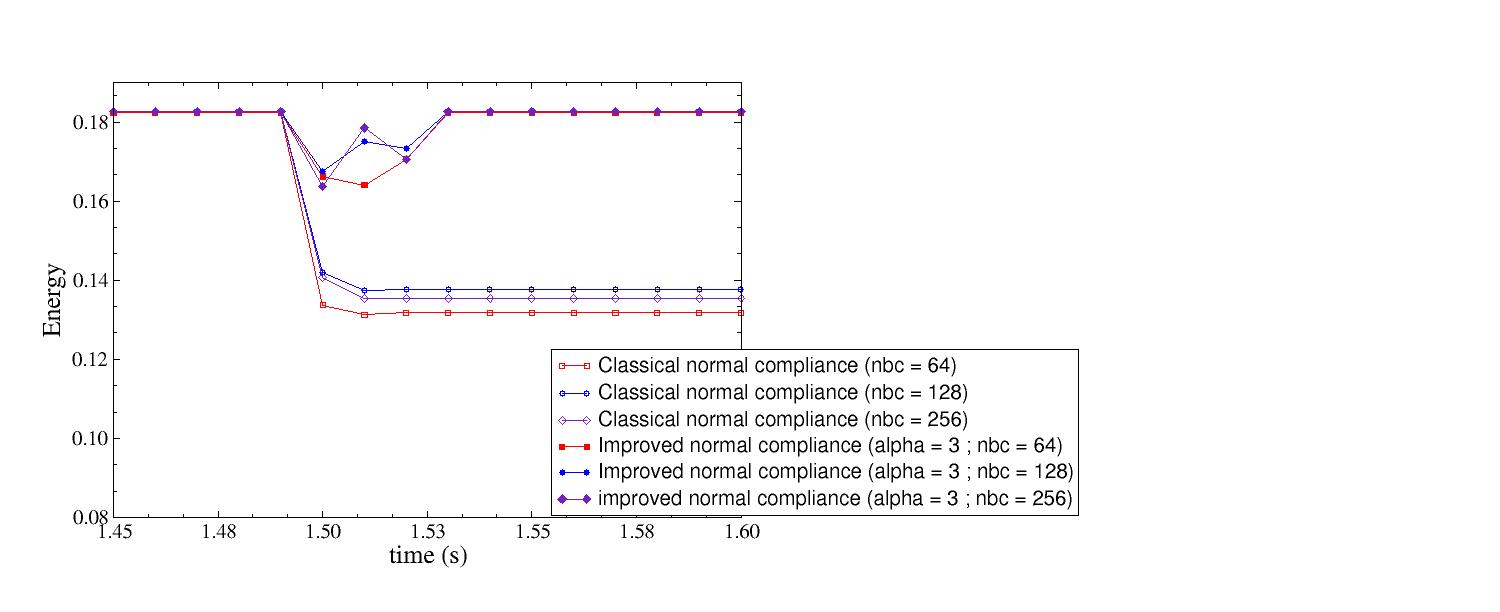}
\end{center}
\caption{Discrete energy behavior of Active Set schemes with classic Normal Compliance and with Improved Normal compliance ($\alpha =3$) as a function of the number of contact points ($\Delta t=0.01$, $c_\nu=1 e^{3}$).}
\label{ene_ball_4}
\end{figure}

In Figure \ref{ene_ball_4}, we observe the behavior of the discrete energy given by the Active Set scheme with classic normal and improved compliance with respects to the parameter $\alpha$ and mesh refinement related to the contact boundary, that is to say the number of contact points $nbc$. We notice that the numerical results obtained by using the classical normal compliance with a different number of contact points show a strong energy dissipation (between $24\%$ and $28\%$). Such a phenomenon is expected in this configuration and can be explained by the discrete energy balance; we refer to \cite{ayyad2009formulation,ayyad2009frictionless} for more details. However, using the Improved Normal Compliance method ($\alpha=3$) with the same contact boundary discretization, we see a remarkable improvement regarding the conservation of  energy in each case after the impact and with less fluctuation during the impact. It is interesting, albeit expected, to observe to what extent the mesh refinement related to the contact boundary impacts the energy conservation properties in both cases.
\begin{figure}[h!]
\begin{center}
	\includegraphics[width=18cm]{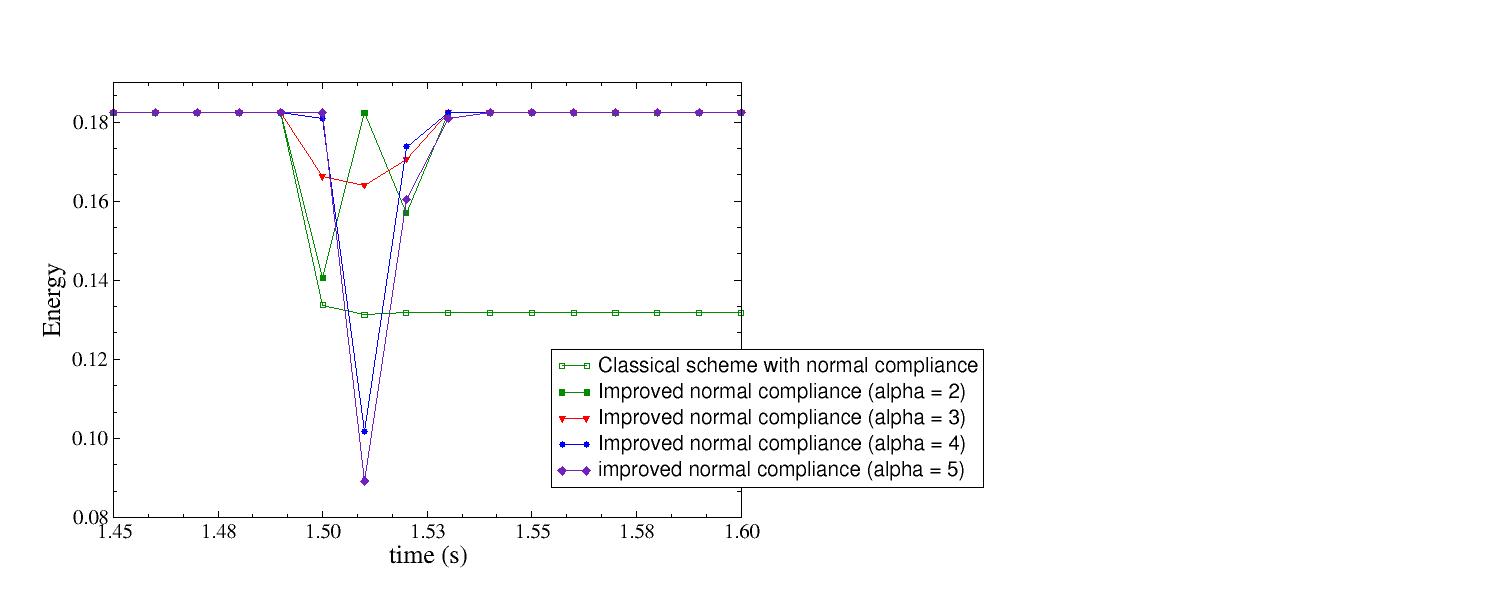}
\end{center}
\caption{Discrete energy behavior of the Active Set scheme with Improved Normal compliance ($\Delta t =0.01\ s$) as a function of $\alpha$ ($c_\nu=1 e^{3}$).}
\label{ene_ball_5}
\end{figure}
\begin{figure}[h!]
\begin{center}
	\includegraphics[width=18cm]{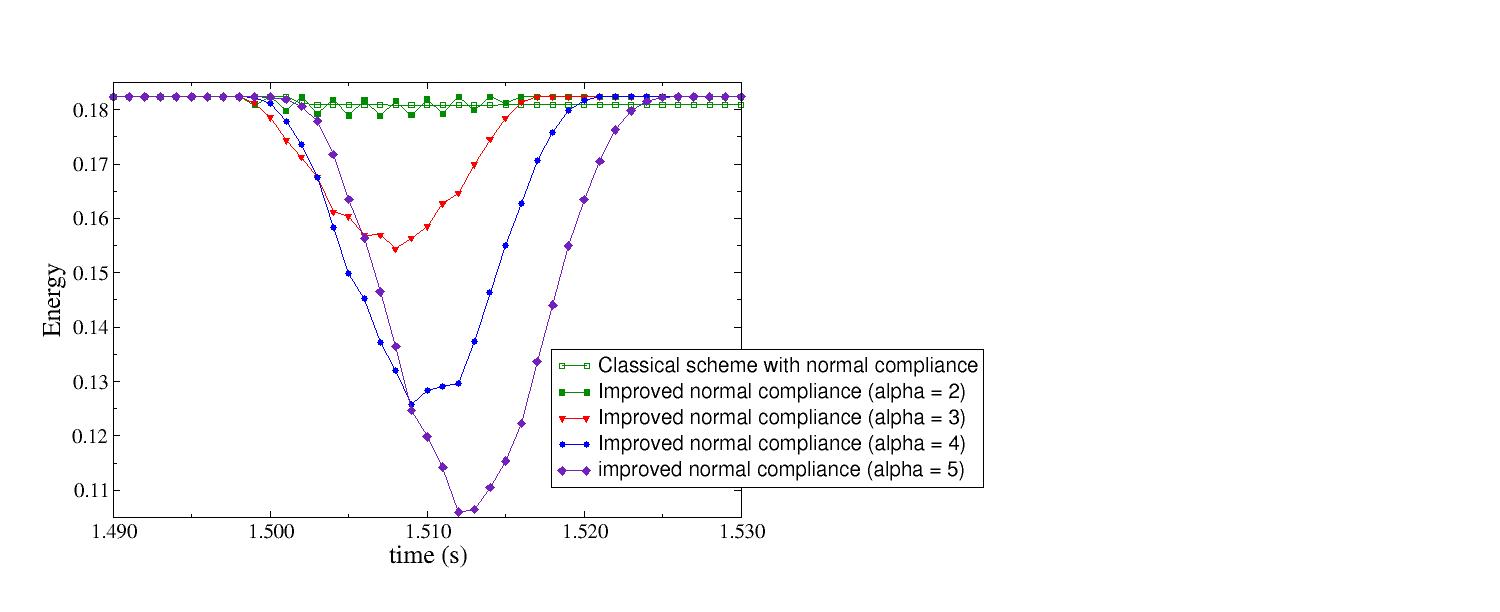}
\end{center}
\caption{Discrete energy behavior of the Active Set scheme with Improved Normal compliance ($\Delta t =0.001$) as a function of $\alpha$ ($c_\nu=1 e^{3}$).}
\label{ene_ball_6}
\end{figure}
\newpage
In Figure \ref{ene_ball_5} with a time step of $1e^{-2}\ s$, we notice that the Active Set scheme produces a significant energy dissipation ($28\%$) with the standard normal compliance method. For the improved normal compliance, we notice some fluctuations, but the system recovers exactly the energy after the impact. By comparing with Figure \ref{ene_ball_6} ($\Delta t=1e^{-3}\ s$) and that of Figure \ref{ene_ball_5}, we notice large fluctuations for different values of $\alpha$ greater than $4$ for the improved normal compliance method. Note that, while it seems at first sight that the classical scheme with normal compliance conserves the energy, that is not actually the case as there is a slight energy dissipation ($1\%$). Such a phenomena is expected and can be explained by the smallness of $\Delta t$ considered.

Regarding the fluctuations, the results observed seem consistent with the theory. As a matter of fact, let us consider the following configurations:
\begin{itemize}
	\item Let $\alpha_1$ and $\alpha_2$ be two values of the $\alpha$ coefficient used in INC, such that $\alpha_2>\alpha_1$;
	\item Let the associated normal stress values $\lambda_{{\boldsymbol \nu-\frac{1}{2}}}^1$, $\lambda_{{\boldsymbol \nu-\frac{1}{2}}}^2$ be given by (\ref{improvement}).
\end{itemize}
As the penetrations are very small for this physical setting, for the same penetration we have $\lambda_{{\boldsymbol \nu-\frac{1}{2}}}^1 > \lambda_{{\boldsymbol \nu-\frac{1}{2}}}^2$. It means that the penetrations observed with $\alpha=\alpha_2$ are to be larger than with $\alpha=\alpha_1$ (see Table \ref{tab}). Therefore, the energy dissipated is expected to be larger with $\alpha=\alpha_2$ and it turns out to be the case with Figure \ref{ene_ball_6}, as the fluctuation amplitude is strictly increasing with respect to the value of $\alpha$.
\begin{table}[!ht]
\begin{center}
	\begin{tabular}{lccccc}
		\hline
		nbc&16&32&64&128&256\\
		\hline
		dof&50&192&784&3160&12520\\
		\hline
		CPU time for Quasi-Lagrangian with Signorini law & 6.2 & 25.6 & 248.9 & 2950.1 & 47452.8 \\
		CPU time for Active set with persistent conditions & 7.2 & 33.7 & 159.2 & 1089.9 & 10492.8 \\
		CPU time for Active set with improved normal compliance & 7.2 & 34.2 & 158.6 & 1101.1 & 10527.1 \\
		CPU time for Active set with classical normal compliance & 7.9 & 33.8 & 159.1 & 1074.5 & 10384.2 \\
		\hline
	\end{tabular}
\end{center}
\caption{Results of the Active Set method and the quasi-Lagrangian method in comparison with the number of degrees of freedom (dof), the number of contact nodes (nbc) and the total CPU time (CPU) in seconds ($\Delta t=0.01\,s$, $\alpha=3$, $c_\nu=1 e^{3}$).} \label{tabb}
\end{table}

In Table \ref{tabb}, we study the convergence in CPU time of the Active Set method compared to different methods (persistent contact, classic normal compliance and the quasi-Lagrangian approach for unilateral contact), depending on the number of degrees of freedom (dof) and the number of contact points (nbc) on the boundary $\Gamma_3$.
When $\text{nbc}<64$, the quasi-Lagrangian method is slightly more efficient than the Active Set method. When we increase the number of degrees of freedom (dof) we notice that the Active Set method is much faster in terms of CPU time than the quasi-Lagrangian method; this can be explained by the fact that the Active Set method does not require the use of Lagrange multipliers, and this method requires less Newton iterations; for more details we refer for instance to \cite{ABD1,ABCD2}.
\subsection{Impact of a hyperelastic ring on a foundation}
The interest of this example is to give a validation of the Active Set method with the improved normal compliance conditions on a hyperelastodynamic problem. This non trivial example, introduced by Laursen \cite{laur2002} concerns an academic problem of frictional impact of a hyperelastic ring against a foundation.
\begin{figure}[!h]
	\begin{center}
		\includegraphics[width=10cm]{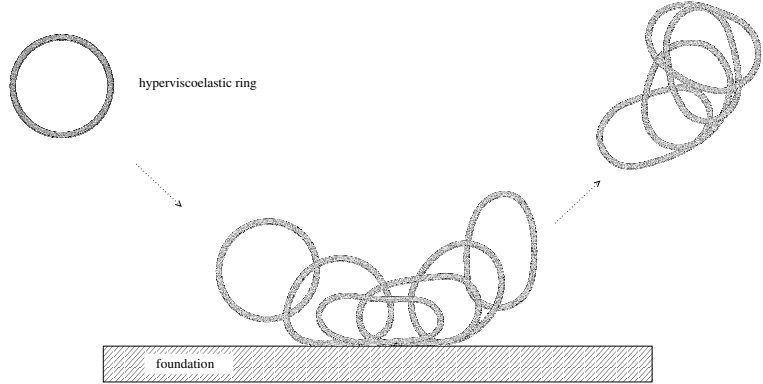}
	\end{center}
	\caption{Sequence of the deformed hyperelastic ring before, during, and after impact.}
	\label{ring_test}
\end{figure}
The details on the physical setting of the problem are given below:
\begin{align*}
	&\Omega\ , =\left\{(x_1,x_2)\in \mathbb{R}^2:\ 81\leq(x_1-100)^2
	+ (x_2-100)^2 \leq 100 \right\},\\
	& \partial_0 \Omega= \varnothing, \qquad \partial_g \Omega= \left\{(x_1,x_2)\in \mathbb{R}^2: \ (x_1-100)^2
	+ (x_2-100)^2 = 81 \right\}, \\
	& \partial_c \Omega=\left\{(x_1,x_2)\in \mathbb{R}^2: \ (x_1-100)^2
	+ (x_2-100)^2 = 100 \right\}.
\end{align*}
The domain $\Omega$ represents the cross-section of a three-dimensional deformable body under the assumption of plane stress. The ring is launched with an initial velocity  towards a foundation, as shown in Figure \ref{ring_test}. The foundation is given by $\left\{(x_1,x_2)\in \mathbb{R}^2:\ x_2 \leq 0 \right\}$. For the discretization, we use 1664 elastic nodes. For numerical experiments, the data are:
\begin{eqnarray}
	\begin{array}{l}
		\rho = 1000 \,kg/m^3, \quad T=10\,s, \quad \Delta t=0.01\,s, \nonumber \\[2mm]
		{\bf u}_0=(0,0)\,m, \quad {\bf u}_1=(10,-10)\,m/s, \quad {\bf f}_0=(0,0)\,N/m ^2, \quad {\bf f}_1= (0,0)\, N/m, \nonumber \\[2mm]
		c_1 = 0.5\,M\!Pa, \quad c_2 = 5.0 e^{-3}\,M\!Pa, \quad c_3 = 5.0 e^{-5}\,M\!Pa, \quad D = 100\,M\!Pa,  \nonumber \\[2mm]
		c_{\nu}=1000, \quad a=1.5 ,\quad b=0.5, \quad \alpha =100. \nonumber
	\end{array}
\end{eqnarray}
The response of the compressible material, considered for Ogden's constitutive law \cite{ciarlet1982lois} is characterized by the following energy density:
\begin{equation}\nonumber
	W({\bf F}) = c_1 (I_1 - 3) + c_2 (I_2 - 3) + d (I_3 - 1) - (c_1 + 2 c_2 + d) \ln
	I_3,
\end{equation}
with $c_1 = 0.5 \,M\!Pa$, $c_2 = 0.5 \cdot 10^{-2}\, M\!Pa$ and $d = 0.35 \,M\!Pa$ and, using the Green-Lagrange tensor defined by $\mathbf C = {\mathbf F}^T \mathbf F$, the invariants $I_1$, $I_2$ and $I_3$ are defined by
$$I_1({\bf C}) ={\rm tr}({\bf C}), \qquad I_2({\bf C}) = \frac{(\rm tr({\bf C}))^2 - tr({\bf C}^2)}{2}, \qquad I_3({\bf C}) = \det({\bf C}). $$
\begin{figure}[!h]
	\begin{center}
		\includegraphics[width=11cm]{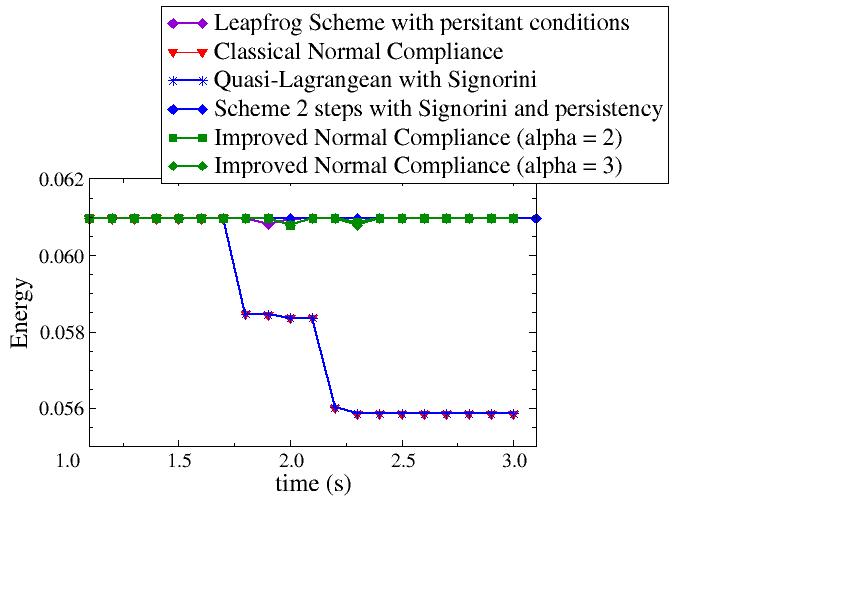}
	\end{center}
	\caption{Discrete energy behavior of selected time integration schemes during frictionless impact ($\Delta t=0.01$, $c_\nu=1 e^{3}$).}
	\label{ene_ring_1}
\end{figure}
\begin{table}[!ht]
	\begin{center}
		\begin{tabular}{lc}
			\hline
			Methods & Maximum error on the $\delta_\nu$ \\
			\hline
			Quasi-Lagrangian with Signorini law & 0. \\
			Scheme with Signorini and persistent (Ayyad--Barboteu) & $7.1 e^{-2}$ \\
			Active set with persistent conditions &$8.9 e^{-2}$\\
			Active set with classical normal compliance & $1 e^{-5}$ \\
			Active set with improved normal compliance ($\alpha = 2$) &  $3.4 e^{-4}$ \\
			Active set with improved normal compliance ($\alpha = 3$)& $4.9 e^{-3}$ \\
			\hline
		\end{tabular}
	\end{center}
	\caption{Maximum error on normal contact displacement ($\Delta t=1 e^{-2}$, $c_\nu=1 e^{3}$).} \label{tabbb}
\end{table}

\begin{table}[!ht]
	\begin{center}
	\scalebox{0.9}{
		\begin{tabular}{lccccc}
			\hline
			nbc&32&64&128&256&512\\
			\hline
			dof&192&384&1792&4608&15360\\
			\hline
			CPU time for Quasi-Lagrangian with Signorini law & 8.59 & 18.03 & 139.69 & 428.80 & 2693.86 \\
			CPU time for Active set with persistent conditions & 7.3 & 16.2 & 99.5 & 301.8 & 1252.7 \\
			CPU time for Active set with improved normal compliance $\alpha=3$ & 6.9 & 17.3 & 107.0 & 274.7 & 1251.8 \\
			CPU time for Active set with classical normal compliance & 6.3 & 13.4 & 87.9 & 221.9 & 1056.8 \\
			\hline
		\end{tabular}
		}
	\end{center}
	\caption{Results of the Active Set method and the quasi-Lagrangian method in comparison with the number of degrees of freedom (dof), the number of contact nodes (nbc) and the total CPU time (CPU) in seconds ($\Delta t=0.01$,~$c_\nu=1 e^{3}$).} \label{tabbbb}
\end{table}

\begin{figure}[!h]
	\begin{center}
		\includegraphics[width=10cm]{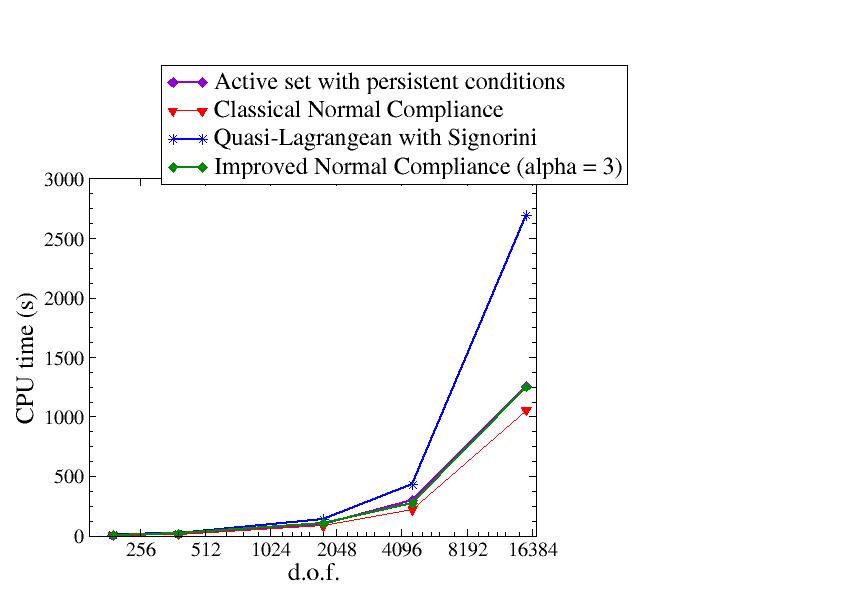}
	\end{center}
	\caption{CPU time of the Active Set method and the quasi-Lagrangian method in comparison with the number of degrees of freedom ($\Delta t=0.01$, $c_\nu=1 e^{3}$).}
	\label{CPU_ring}
\end{figure} 

\begin{figure}[!h]
	\begin{center}
		\includegraphics[width=18cm]{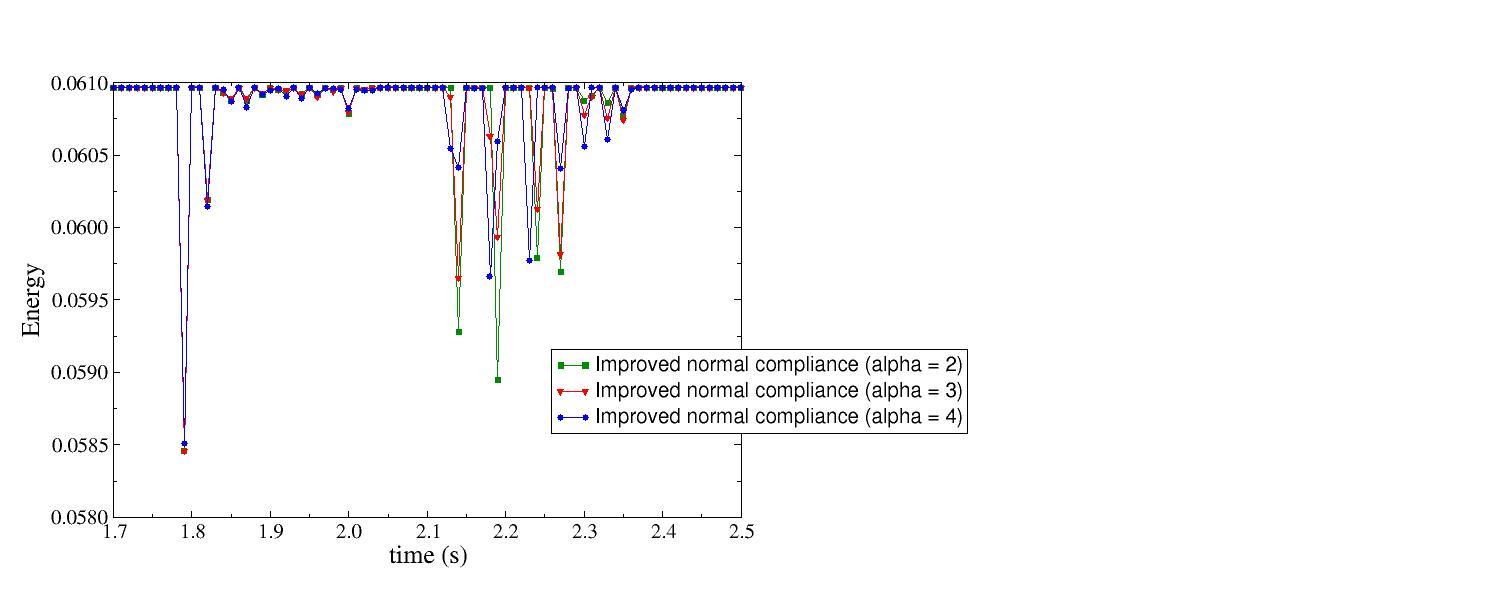}
	\end{center}
	\caption{Discrete energy behavior of the Active Set scheme with Improved Normal compliance ($\Delta t=0.01$) as a function of $\alpha$ without friction ($c_\nu=1 e^{3}$).}
	\label{ene_ring_2}
\end{figure}

In Figure \ref{ene_ring_1}, we observe the evolution of the total discrete energy of the frictionless dynamic system depending on selected time-integration schemes, i.e, leapfrog scheme with persistent conditions, classical normal compliance, quasi-Lagrangian method, two-step scheme \cite{ayyad2009formulation} and  improved normal compliance. We find that the leapfrog scheme and the two-step scheme (Signorini and persistent condition) conserve the energy of the system unlike the quasi-Lagrangian method with unilateral contact ($9\%$ of energy dissipation) and the classical normal compliance ($9\%$ of energy dissipation) which generates maximum error on the normal contact displacement of $1e^{-5} m$ ($c_\nu = 1e^4$). Due to the increment step of the leapfrog scheme, this method generates a maximum error on the normal contact displacement of $8.9e^{-2}m$ (see Table \ref{tabbb}), therefore higher than the other methods. Moreover, the improved normal compliance method is characterized by an exact conservation of energy after the impact. This method also allows to minimize interpenetration with $\alpha=2$ ($3.4e^{-4} m$, see Table \ref{tabbb}).

In Figure \ref{ene_ring_2}, with a time step of $1e^{-2}\,s$, we present the evolution of the total discrete energy of the frictionless dynamic system with the Active Set scheme with improved normal compliance conditions according to the parameter $\alpha$. We observe an exact conservation of energy after the impact. However, the method generates some fluctuations during impact. Based on these observations, the choice $\alpha=3$ seems to perform at best.

In Table \ref{tabbbb}, we study the convergence in CPU time of the Active Set--INC method compared with different methods (persistent contact, standard normal compliance  and quasi-Lagrangian method for unilateral contact) depending on the number of degrees of freedom (dof) and  the number of contact points (nbc) on the boundary $\Gamma_3$. For different values of nbc, the Active Set--INC method is better than the quasi-Lagrangian method in terms of CPU time; as in the elastic ball numerical example, this can be explained by the fact that the Active Set--INC method does not involve Lagrange multipliers and that it generates fewer nonlinear Newton iterations.\\

\begin{figure}[!h]
	\begin{center}
		\includegraphics[width=18cm]{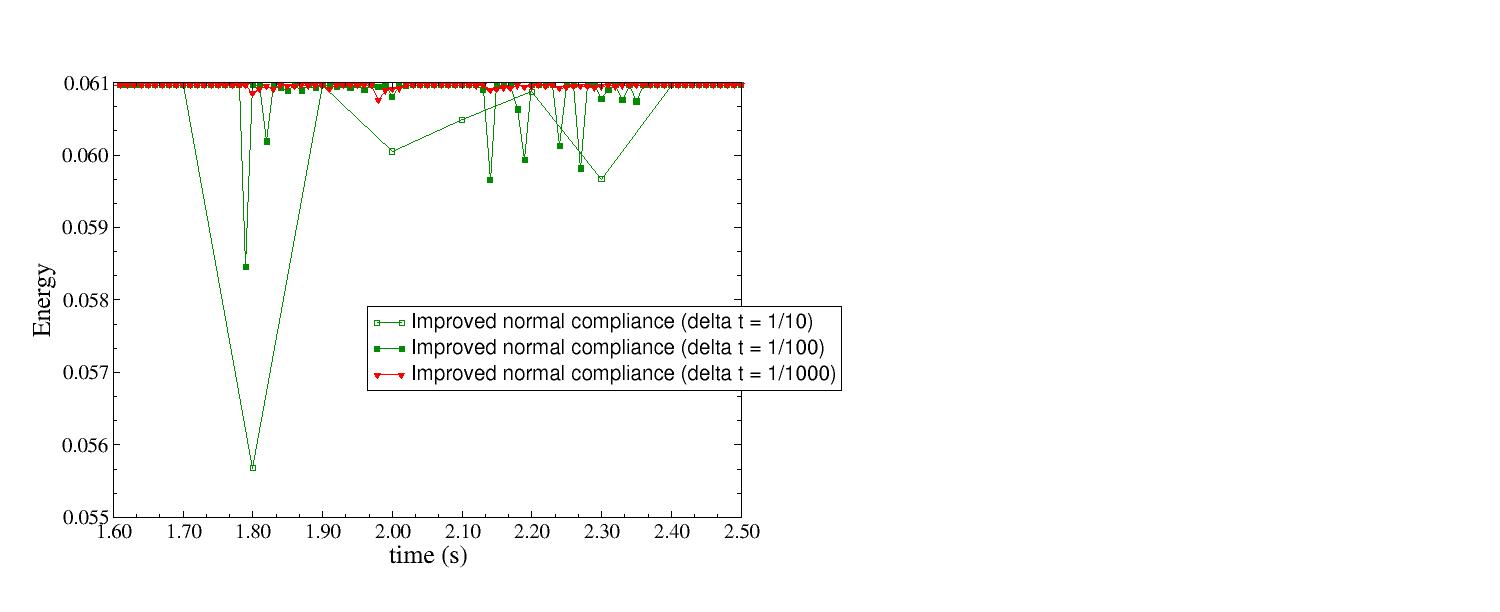}
	\end{center}
	\caption{Discrete energy behavior of the Active Set scheme with Improved Normal compliance ($\alpha =3$) as a function of $\Delta t$ ($c_\nu=1 e^{3}$).}
	\label{ene_ring_4}
\end{figure}
In Figure \ref{ene_ring_4}, we are interested in the Active Set--INC method ($\alpha=3$) depending on the time step. For $\Delta t=1e^{-3}\ s$, we notice that the method is characterized by a better conservation of energy during the impact. On the other hand, for a time step $\Delta t$ ranging from $1e^{-1}\, s$ to $1e^{-2}\,s$, we notice that the method generates some energy fluctuations during the impact, but it conserves energy after the impact.

In Figure \ref{ene_ring_3}, we observe the behavior of the discrete energy using an Active Set scheme with normal compliance condition with friction ($\mu=0.2$). This method is characterized by a slightly dissipative behavior of friction; this is consistent with the physical dissipative nature of the friction phenomenon. The difference of the dissipative behavior comes from the model of frictional contact according to the parameter $\alpha$.
\begin{figure}[!h]
	\begin{center}
		\includegraphics[width=18cm]{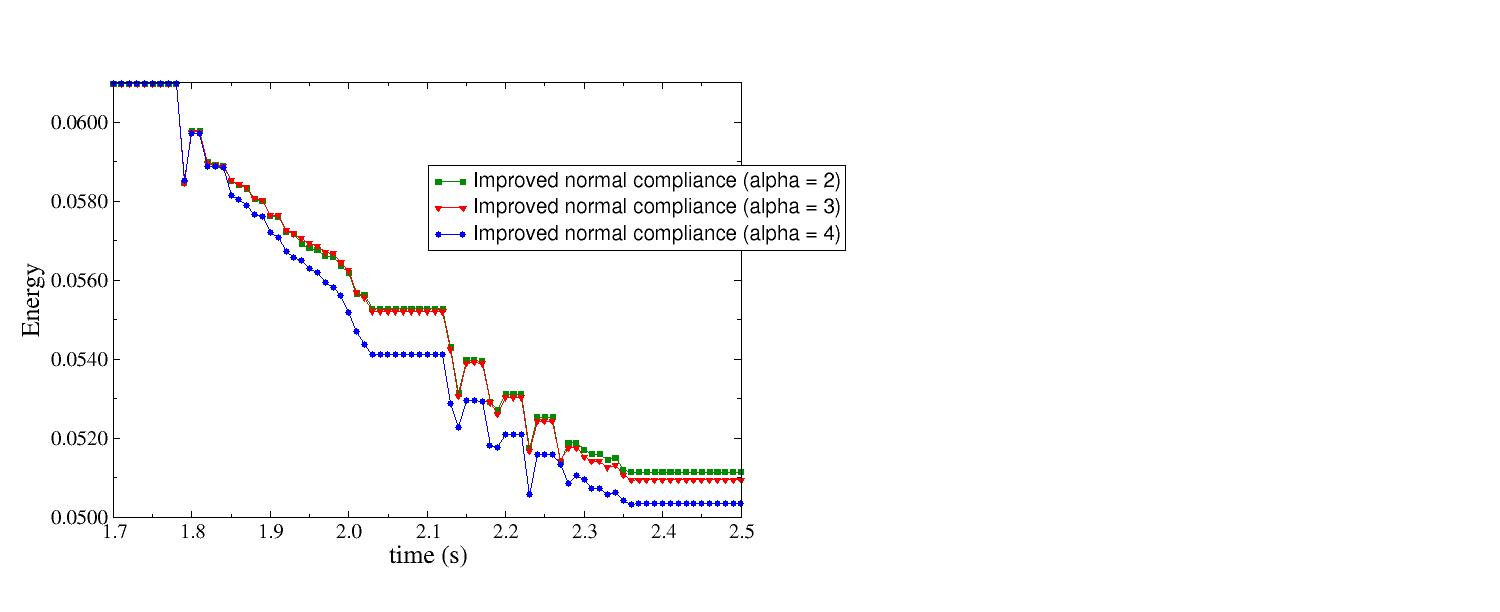}
	\end{center}
	\caption{Discrete energy behavior of the Active Set scheme with Improved Normal compliance ($\Delta t =1e^{-2}\,s$) as a function of $\alpha$ with friction ($\mu = 0.2$, $c_\nu=1 e^{3}$, $c_\tau=1 e^{3}$).}
	\label{ene_ring_3}
\end{figure}

\section{Conclusion and perspectives} \label{s7}
In this work, we investigated a new energy conservation method for hyperelastodynamic contact problems, from a theoretical and numerical point of view.  We first introduced the mathematical framework for general frictional contact problems in large deformations, and demonstrated how the Improved Normal Compliance (INC) condition can be applied in order to obtain energy conservation properties during impact. Next, we assessed these conservation properties in the continuous case and in its space-time discrete counterpart. Then, we derived two semi-smooth Newton Primal Dual algorithms to handle respectively the Standard Normal Compliance (SNC) and INC conditions. The following section presents numerical results on two classical representative academic test cases, namely the impact of an elastic ball and the impact of a hyperelastic ring on a foundation. The aim was to compare the performances of the INC with respect to the SNC and the classical methods used within the literature. It turned out that the methods implemented are at least just as much relevant as several other methods, regarding energy conservation properties after impact, as they display physically realistic behaviors. In the second test case, even more challenging from a numerical point of view, the energy is even almost perfectly conserved during and after the impact. Also, one of the interesting and well-known advantages when using the Active Set approach is the numerical efficiency. Indeed, in both test cases, this method is much more efficient (CPU time-wise, between twice and five times as fast for large problems) than the classical quasi-Lagrangian approach.

At this stage, we outline some future perspectives. As the results in 2D cases seem quite promising, an extension to 3D ought to be considered in order to assess the behavior of this approach on more complex cases. We can reasonably expect the performances with respect to the quasi-Lagrangian approach in 3D to be of an even greater order of magnitude than in 2D. Regarding an underlying practical use case, it could be interesting to investigate a potential application to granular media. Finally, this method was only applied for hyperelastic and elastic material so far, and provided quite successful results, which suggests to consider alternative constitutive laws. Amongst them, in particular, an extension to viscosity and plasticity would make sense as such materials involves energy dissipation phenomena.

\bibliographystyle{unsrt}
\bibliography{sample}

\end{document}